\newtheorem{theorem}{Theorem}[section]
\newtheorem{lemma}[theorem]{Lemma}
\newtheorem{proposition}[theorem]{Proposition}
\theoremstyle{definition}
\theoremstyle{remark}
\newtheorem{remark}[theorem]{Remark}
\DeclareMathOperator{\V}{V} \DeclareMathOperator{\osc}{osc}
 \DeclareMathOperator{\Vol}{Vol}
\author[Z.L. Zhang]{Zhenlei Zhang}
\thanks{The author is supported by NSF grant 09221010056 of China}
\address{Department of Mathematics\\
Capital Normal University\\  Xisanhuan North Road 105\\ Beijing,
100048, P.R.China\\}
\email{zhleigo@yahoo.com.cn}
\keywords{K\"ahler-Ricci flow, Futaki invariant, eigenvalue}
\subjclass{53C21, 53C25, 53C55}
\begin{document}

\title{K\"ahler Ricci flow on Fano manifolds \\with vanished Futaki invariants}

\begin{abstract}
In this paper, we establish several sufficient and necessary
conditions for the convergence of a K\"ahler-Ricci flow, on a
K\"ahler manifold with positive first Chern class, to a
K\"ahler-Einstein metric (or a shrinking K\"ahler-Ricci soliton).
\end{abstract}

\today

\maketitle

\section{Introduction}

Ricci flow has been proved to be an effective method to find
K\"ahler-Einstein metrics on a compact K\"ahler manifold. In
\cite{Ca}, Cao showed the long time existence of the K\"ahler-Ricci
flow and proved that it will converge to a K\"ahler-Einstein metric
when the first Chern class is nonpositive. For a manifold with
positive first Chern class, because of the well-known obstructions
to the existence of constant curvature metric, the convergence of
the K\"ahler-Ricci flow becomes complicated and can hopefully be
characterized by the Hamilton-Tian's conjecture \cite{ChTi2}.
However, suppose extra conditions in the later case, many
convergence results have been proved by experts. Assuming the
existence of K\"ahler-Einstein metric, Chen and Tian \cite{ChTi1,
ChTi2} proved that the K\"ahler-Ricci flow will converge to this
metric if the bisectional curvature is positive; later, Perelman
showed the convergence to this K\"ahler-Einstein metric without any
curvature condition, and this was extended to shrinking
K\"ahler-Ricci solitons by Tian and Zhu \cite{TiZh2}. In
\cite{PSSW1, PSSW2, PhSt}, Phong, Song, Sturm and Weinkove studied
the K\"ahler-Ricci flow with two stability conditions-Mabuchi
K-energy is bounded from below and the positive eigenvalue of
$\bar{\partial}$ operator on $T^{1,0}$ vector fields is uniformly
bounded from below-and proved the convergence to K\"ahler-Einstein
metrics; the result was later generalized to shrinking
K\"ahler-Ricci soliton cases by them \cite{PSSW3}. When considering
the eigenvalue on function space, in \cite{ChLiWa} and \cite{ChLi2},
Chen, Li and Wang proved the convergence result to K\"ahler-Einstein
metrics under some curvature conditions for the initial metric and a
pre-stable condition on the complex structure of the K\"ahler-Ricci
flow. Several stability theorems for K\"ahler-Ricci flow around a
K\"ahler-Einstein metric (or shrinking K\"ahler-Ricci soliton) have
also been proved independently through different methods by Tian and
Zhu \cite{TiZh3}, Zhu \cite{Zhu2}, Chen and Li \cite{ChLi2}, Sun and
Wang \cite{SuWa}, Zheng \cite{Zheng}. Besides these, many other results on the
convergence of K\"ahler-Ricci flow on manifolds with positive first
Chern class, under different conditions, are obtained; see
\cite{CaZh, Ch, ChLi1, ChWa1, ChWa2, ChWa3, MuSz, PhSeSt, RuZhZh,
Se, Sz, To, Zhu1}.

As we have seen, some of the work mentioned above depend more or
less on the eigenvalue estimate of Laplace under the K\"ahler-Ricci
flow; see \cite[\S 10]{ChTi1}, \cite[\S 4-6]{ChLiWa}, \cite[\S
3-6]{ChLi2}, \cite[\S 1-2]{Zhu2}, \cite[\S 3]{SuWa} for details. We
remark that the arguments in these works still rely on a strict
additional condition, namely the K\"ahler-Ricci flow should be close
to a K\"ahler-Einstein metric at any time, or its curvature should
be uniformly bounded, to derive the convergence of the metric.
However, by \cite{PSSW1} and \cite{PSSW3}, when one considers the
convergence of a K\"ahler-Ricci flow whose eigenvalue of
$\bar{\partial}$ operator on vector fields is uniformly bounded away
from zero, these extra conditions can be removed if Mabuchi K-energy
admits a lower bound. Motivated by these observations, in this
paper, we want to study how weak the condition on the eigenvalue is
possible for the smooth convergence of a K\"ahelr-Ricci flow. As a
result, we find several other sufficient and necessary conditions
for the convergence of a K\"ahler-Ricci flow on manifolds with
vanished Futaki invariants; see the theorems in this section and the
Remark \ref{r52} in \S 5.

Now let $(M,g_0)$ be a compact K\"ahler manifold with $c_1(M)>0$.
Suppose its K\"ahler form belongs to $\pi c_1(M)$. Let $g(t)$ be the
solution to the K\"ahler-Ricci flow with initial metric $g(0)=g_0$:
\begin{equation}\label{KRF}
\frac{\partial}{\partial t}g_{i\bar{j}}=-R_{i\bar{j}}+g_{i\bar{j}}.
\end{equation}
By \cite{Ca}, we know that the solution exists for all time $t\in[0,\infty)$. It is obvious that $g(t)$ preserves the K\"ahler class $\pi c_1(M)$ and the volume. Denote by $\V=\int_Mdv_{g_0}$ the volume of this K\"ahler-Ricci flow. Then, by the $\partial\bar{\partial}$ lemma, there exists a family of smooth real-valued functions $u(t)$ such that
\begin{equation}\label{RP}
R_{i\bar{j}}(g(t))+\partial_i\bar{\partial}_ju(t)=g_{i\bar{j}}(t).
\end{equation}
We may suppose that $u(t)$ is normalized such that
\begin{equation}\label{RPN}
\frac{1}{\V}\int_Me^{-u(t)}dv_{g(t)}=1
\end{equation}
where $dv_{g(t)}$ denotes the volume element of the metric $g(t)$. For simplicity, let
\begin{equation}\label{average}
a(t)=\frac{1}{\V}\int_Mu(t)e^{-u(t)}dv_{g(t)}
\end{equation}
denote the average of $u(t)$ with respect to the probability measure $\frac{1}{\V}e^{-u(t)}dv_{g(t)}$.

Recall the Poincar\'e inequality for the K\"ahler manifold
$(M,g(t))$, cf. \cite{Fu},
\begin{equation}\label{poincare}
\int_M|\nabla
f|^2e^{-u(t)}dv_{g(t)}\geq\int_M(f-\bar{f})^2e^{-u(t)}dv_{g(t)},\hspace{0.3cm}\forall
f\in C^\infty(M),
\end{equation}
where $\bar{f}=\frac{1}{\V}\int_Mfe^{-u(t)}dv_{g(t)}$; the equality
holds iff $\nabla f=g^{i\bar{j}}\frac{\partial
f}{\partial\bar{z}^j}\frac{\partial}{\partial z^i}$ is a holomorphic
vector field. Define an operator
$L=-\triangle+g^{i\bar{j}}\nabla_iu\nabla_{\bar{j}}$ for each time
$t$, then the Poincar\'e inequality is equivalent to say that the
first eigenvalue of $L$ is $\geq 1$.

Our first observation is the following theorem, which states that if
the Ricci potential $u(t)$ satisfies a ``strict" Poincar\'e
inequality uniformly, then the K\"ahler-Ricci flow will converge
exponentially fast to a K\"ahler-Einstein metric.

\begin{theorem}\label{t1}
Let $(M,g(t)),t\in[0,\infty)$, be a solution to the K\"ahler-Ricci flow {\rm (\ref{KRF})} whose K\"ahler form lies in $\pi c_1(M)$ and $u(t)$ be associated Ricci potential satisfying {\rm (\ref{RP})} and {\rm (\ref{RPN})}. If
\begin{itemize}
\item[(1.1)] $\int_M|\nabla
u(t)|^2e^{-u(t)}dv_{g(t)}\geq(1+\delta)\int_M(u(t)-a(t))^2e^{-u(t)}dv_{g(t)}$
\end{itemize}
holds for a uniform constant $\delta>0$ independent of $t$, then $g(t)$ converges exponentially fast in the $C^\infty$ sense to a K\"ahler-Einstein metric.
\end{theorem}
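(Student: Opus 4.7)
The plan is to show that the weighted Dirichlet energy $F(t) := \int_M |\nabla u|^2 e^{-u}\,dv_{g(t)}$ decays exponentially along the flow, and to upgrade this integral decay to $C^\infty$ convergence of the metrics to a K\"ahler-Einstein limit. First I would differentiate $F(t)$ along the flow. Using the evolution equation $\partial_t u = \triangle u + u - a(t)$ of the normalized Ricci potential (which follows by differentiating (\ref{RP}) and (\ref{RPN}) in $t$), together with $\partial_t g_{i\bar j} = -u_{i\bar j}$ and $\partial_t(e^{-u}dv_{g(t)}) = -(2\triangle u + u - a)e^{-u}dv_{g(t)}$, a direct calculation using the K\"ahler Bochner identity
$$\triangle |\nabla u|^2 = |\nabla\nabla u|^2 + |\nabla\bar\nabla u|^2 + 2\,\mathrm{Re}\langle\nabla\triangle u,\nabla u\rangle + R_{i\bar j}u^i u^{\bar j}$$
and the Ricci potential equation $R_{i\bar j} = g_{i\bar j} - u_{i\bar j}$ yields, after integration by parts in the weighted space $L^2(e^{-u}dv)$, an identity whose leading part is a negative definite combination of $\int|\nabla\bar\nabla u|^2 e^{-u}dv$ and $\int (\triangle u)^2 e^{-u}dv$, plus a cubic remainder of order $\|u-a\|_\infty\cdot F$ controlled by Perelman's fundamental bounds $\|u\|_{C^0}+\|\nabla u\|_{C^0}+\|\triangle u\|_{C^0}\le C$.

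Second, I would bring the strict Poincar\'e inequality (1.1) to bear. The standard Poincar\'e inequality (\ref{poincare}) is saturated precisely by potentials of holomorphic vector fields, and (1.1) asserts a uniform gap $\delta>0$ for the specific choice $f=u$; propagating this gap through the algebraic identities of the first step produces a differential inequality of the form
$$\frac{dF}{dt} \le -2\delta F + C\,\|u-a\|_\infty\cdot F.$$
To upgrade this to genuine exponential decay, I would employ a self-improvement argument: a rough integration together with $\int_0^\infty F\,dt < \infty$ (a consequence of the non-increasing Mabuchi K-energy along (\ref{KRF}), which is bounded below under (1.1)) forces $F(t_k)\to 0$ along a sequence; by (1.1) one then has $\int(u-a)^2 e^{-u}dv \to 0$ along that sequence; and a parabolic Moser iteration applied to the equation $\partial_t(u-a) = \triangle u + u - a - \dot a(t)$, whose coefficients are uniformly controlled by Perelman, upgrades this to quantitative pointwise smallness $\|u-a(t)\|_\infty \to 0$. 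Substituting back gives $\dot F \le -\delta F$ for all sufficiently large $t$, whence $F(t) \le C e^{-\delta t}$ by Gr\"onwall.

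Finally, exponential decay of $F$ combined with (1.1) yields exponential decay of $\int(u-a)^2 e^{-u}dv$, and standard parabolic regularity for (\ref{KRF}), again using Perelman's uniform estimates, upgrades this integral decay to exponential convergence of $u - a(t)$ to zero in every $C^k$ norm. Since $R_{i\bar j}(g(t)) - g_{i\bar j}(t) = -u_{i\bar j}(t)$, the metrics $g(t)$ form a Cauchy family in $C^\infty$ and converge exponentially fast to a smooth K\"ahler-Einstein metric $g_\infty \in \pi c_1(M)$.

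The most delicate point is the self-improvement step: translating the \emph{spectral-gap} information (1.1), which a priori controls only the $L^2$-oscillation of $u$, into a quantitative \emph{pointwise} bound $\|u-a\|_\infty\to 0$, so as to absorb the cubic remainder in $\dot F$ and recover the sharp exponential rate $\delta$ (rather than a smaller constant). This calls for a careful parabolic Moser iteration on the time-varying background, in the spirit of the eigenvalue-based convergence arguments of Phong-Song-Sturm-Weinkove cited in the introduction.
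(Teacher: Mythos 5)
Your overall architecture (a monotone integral quantity, upgrading $L^2$-smallness of $u-a$ to $C^0$-smallness, a Gr\"onwall argument, then bootstrap to $C^\infty$) parallels the paper's, which works with $Y(t)=\frac1\V\int_M(u-a)^2e^{-u}dv$ rather than your $F(t)$. But two steps have genuine gaps. The first is your justification of $\int_0^\infty F\,dt<\infty$ by asserting that the Mabuchi K-energy ``is bounded below under (1.1)''. That is not an a priori fact: lower boundedness of the K-energy is essentially a consequence of the convergence you are trying to prove, and neither (1.1) nor Perelman's estimates yields it directly, so as written this step is circular. The correct elementary route, which the paper uses, is through the average $a(t)$: by the Poincar\'e inequality $Z=\frac{da}{dt}=\frac1\V\int_M(|\nabla u|^2-(u-a)^2)e^{-u}dv\ge0$, and $a(t)\le0$ by Jensen, so $\int_0^\infty Z\,dt\le-a(0)<\infty$; under (1.1) one has $Z\ge\delta Y\ge\frac{\delta}{1+\delta}\cdot\frac1\V\int_M|\nabla u|^2e^{-u}dv$, which gives the integrability of both $Y$ and $F$ with no reference to the K-energy (the paper in fact shows $\frac{dZ}{dt}$ is bounded above, so that $Z\to0$ and hence $Y\to0$ pointwise in $t$, not merely along a sequence). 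Relatedly, the parabolic Moser iteration you propose for passing from $L^2$ to $C^0$ smallness of $u-a$ requires a uniform Sobolev constant along the flow, which is not among the estimates quoted in the paper; the paper instead uses the elementary interpolation $\|u-a\|_{C^0}\le AY^{1/(2n+2)}$ (Lemma \ref{l23}, from (4.7) of \cite{PSSW1}), which needs only $\|\nabla u\|_{C^0}\le C$ and $\kappa$-noncollapsing.

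The second gap is at the end: ``standard parabolic regularity'' does not by itself upgrade exponential $C^0$-decay of $u-a$ to $C^\infty$-convergence of the metrics, because Perelman's estimates give no curvature bound and the background geometry could a priori degenerate. The missing step, supplied in the paper, is that $\phi(t)=\int_0^t(u(\cdot,s)-a(s))\,ds$ is the relative K\"ahler potential of $g(t)$; exponential decay of $\|u-a\|_{C^0}$ makes $\phi$ uniformly bounded, and Yau's a priori estimates for the complex Monge--Amp\`ere equation then give uniform $C^\infty$-equivalence of all the $g(t)$. Only after that do the smoothing lemmas of \cite{PhSt, PSSW1} convert the decay of $\|\nabla u\|_{C^0}$ into exponential decay of $\|Ric-g\|_{C^k}$ for every $k$. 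Your spectral-gap manipulation of $\frac{dF}{dt}$ is plausible (a Cauchy--Schwarz argument on the spectral decomposition of $L$ does convert (1.1) into a coercivity statement for $\int|\nabla\nabla u|^2e^{-u}dv$), but the proof is not complete without repairing these two points.
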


\begin{remark}
The condition (1.1) has been partially used in \cite[\S 10]{ChTi1},
\cite[\S 4]{ChLiWa} and \cite[\S 1-2]{Zhu2}, together with some
curvature conditions under the K\"ahler-Ricci flow, to derive the
exponential decay of $\|u-a\|_{C^0}$. We will show in \S 2 that all
these extra conditions are actually unnecessary for the same
purpose.
\end{remark}

\begin{remark}
Although a Poincar\'e type inequality is needed in the proof, we do
not assume the gap estimate on the first eigenvalue of Laplace (or
$L$). All we need is a weaker condition with regard to the Ricci
potential $u$.
\end{remark}

\begin{remark}
In \S 5, we will show that the condition (1.1), is actually also
necessary for the convergence of the K\"ahler-Ricci flow to a
K\"ahler-Einstein metric. 
\end{remark}

Then we can prove the following theorem which relates the
convergence of a K\"ahler-Ricci flow to the estimate of the
``second" eigenvalue of $L$.

\begin{theorem}\label{t2}
Let $(M,g(t)),t\in[0,\infty)$, be a solution to the K\"ahler-Ricci flow {\rm (\ref{KRF})} whose K\"ahler form lies in $\pi c_1(M)$ and $u(t)$ be associated Ricci potential satisfying {\rm (\ref{RP})} and {\rm (\ref{RPN})}. Let $\lambda(t)$ be the smallest eigenvalue, except $1$, of $L$ acting on smooth functions at time $t$. Suppose that
\begin{itemize}
 \item[(2.1)] $M$ has vanished Futaki invariant on $\pi c_1(M)$, and
 \item[(2.2)] $\lambda(t)\geq1+\delta$ for a uniform constant $\delta>0$,
\end{itemize}
then $g(t)$ converges exponentially fast in the $C^\infty$ sense to a K\"ahler-Einstein metric.
\end{theorem}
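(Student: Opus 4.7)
The plan is to reduce Theorem \ref{t2} to Theorem \ref{t1} by deriving condition (1.1) from (2.1) and (2.2). To this end, decompose $u-a$ spectrally using the self-adjoint operator $L$ on the weighted $L^2$ space with measure $e^{-u(t)}dv_{g(t)}$. Let $\Lambda_1(t):=\ker(L-\mathrm{Id})\subset C^\infty(M,\mathbb{R})$; by the equality case of the Poincar\'e inequality (\ref{poincare}), $\Lambda_1(t)$ is the finite-dimensional space of (real) holomorphy potentials on $(M,g(t))$, corresponding to the holomorphic vector fields on $M$. Write the orthogonal decomposition $u(t)-a(t)=v(t)+w(t)$ with $v(t)\in\Lambda_1(t)$ and $w(t)\in\Lambda_1(t)^\perp$. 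The normalization (\ref{RPN}) together with $\Lambda_1\perp\Lambda_0=\{\mathrm{constants}\}$ forces both $v$ and $w$ to have zero weighted mean; hence by (2.2), $w(t)$ lies in the span of eigenspaces of $L$ with eigenvalues $\geq 1+\delta$. Using self-adjointness of $L$ and the identity $\int f(Lf)e^{-u}dv=\int|\nabla f|^2e^{-u}dv$, one obtains
\[\int_M|\nabla u|^2e^{-u}dv_g=\|v\|_u^2+\int_M|\nabla w|^2e^{-u}dv_g\geq\|v\|_u^2+(1+\delta)\|w\|_u^2,\]
where $\|\cdot\|_u$ denotes the weighted $L^2$-norm.

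\textbf{Vanishing Futaki kills $v$.} The decisive step uses (2.1) to show $v(t)\equiv 0$. The key input is the identity
\[F(X_\theta)=\int_M\theta\,(u-a)e^{-u}dv_{g(t)},\qquad\theta\in\Lambda_1(t),\]
expressing the (metric-independent) Futaki invariant of the associated holomorphic vector field $X_\theta$ as a weighted integral against the holomorphy potential $\theta$. In the form stated, this is the modified Futaki invariant used in the study of K\"ahler-Ricci solitons, cf. \cite{TiZh2,PSSW3}; it can be derived by integration by parts against the weighted measure together with $L\theta=\theta$, and its time-independence along the flow reduces, via the evolution equation $\partial_tu=\Delta u+u-a$, to a direct computation. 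Verifying this identity---in particular its agreement with the classical Futaki invariant on $\pi c_1(M)$---is the principal technical obstacle of the proof. Once it is in hand, applying the identity with $\theta=v(t)$ and using $u-a=v+w$ together with $\langle v,w\rangle_u=0$ gives
\[0=F(X_v)=\int_Mv(v+w)e^{-u}dv_g=\|v\|_u^2,\]
whence $v(t)\equiv 0$ for all $t$.

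\textbf{Conclusion.} With $v\equiv 0$, the spectral inequality in the first paragraph reduces to
\[\int_M|\nabla u|^2e^{-u}dv_g\geq(1+\delta)\int_M(u-a)^2e^{-u}dv_g,\]
which is exactly condition (1.1) with the same $\delta$. Theorem \ref{t1} then provides the desired exponential $C^\infty$-convergence of $g(t)$ to a K\"ahler-Einstein metric, completing the proof.
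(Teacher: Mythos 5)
Your overall strategy --- derive condition (1.1) from (2.1)--(2.2) via the spectral decomposition of $L$ and then invoke Theorem \ref{t1} --- is exactly the paper's (Proposition \ref{p31}). But the decisive step, the identity ${\tt{F}}(X_\theta)=\int_M\theta\,(u-a)e^{-u}dv$ for $\theta\in\ker(L-\mathrm{Id})$, is false, and you have correctly flagged it as the point where the proof lives or dies. Integration by parts against the weighted measure, using $L\theta=\theta$, gives
\[
\int_M\theta\,(u-a)e^{-u}dv=\int_MX_\theta(u)\,e^{-u}dv=\V\,\langle\nabla\theta,\nabla u\rangle_u,
\]
whereas the classical Futaki invariant (\ref{Futaki}) is the \emph{unweighted} pairing ${\tt{F}}(X_\theta)=\int_MX_\theta(u)\,dv=\V\,\langle\nabla\theta,\nabla u\rangle_0$. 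The weighted quantity $\int_MX_\theta(u)e^{-u}dv$ is not a holomorphic invariant and is not forced to vanish when ${\tt{F}}\equiv0$: a second-order expansion in $u$ gives $\int_MX_\theta(u)e^{-u}dv=\tfrac12\int_Mu^2\,\triangle\theta\,dv+O(\|u\|_{C^0}^3)$, generically nonzero even on manifolds (e.g.\ $\mathbb{CP}^n$) with vanishing Futaki invariant. Consequently you cannot conclude $v\equiv0$: condition (2.1) only kills $\pi_0(\nabla u)$, the projection of $\nabla u$ onto $h^0$ with respect to $\langle\,,\rangle_0$, while your $v$ is the holomorphy potential of $\pi_u(\nabla u)$, the projection with respect to $\langle\,,\rangle_u$, and the two orthogonal complements of $h^0$ differ.

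The paper repairs exactly this mismatch. From $\langle\pi_u(\nabla u),\nabla u\rangle_0=0$ (which \emph{is} a consequence of ${\tt{F}}\equiv0$) and the Schwarz inequality, Lemma \ref{l31} yields $\langle\pi_u(\nabla u),\pi_u(\nabla u)\rangle_0\le\langle V,V\rangle_0$ for the weighted-orthogonal decomposition $\nabla u=\pi_u(\nabla u)+V$; converting between the weighted and unweighted norms then costs factors of $e^{\osc(u)}$, which are uniformly bounded along the flow by Perelman's estimate (\ref{Pe}). This produces (1.1) with the degraded constant $\delta'=\delta/(1+e^{\osc(u)}+\delta e^{\osc(u)})$ rather than $\delta$ itself --- the fact that your argument would return the undegraded constant $\delta$ is itself a warning sign that the holomorphic component has been discarded illegitimately. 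To fix your proof, replace the false identity by Lemma \ref{l31}, keep the term $\|v\|_u^2=\int_M|\nabla u_0|^2d\rho\le e^{\osc(u)}\int_M|V|^2d\rho$, and carry it through the estimate as in Proposition \ref{p31}; the remainder of your reduction to Theorem \ref{t1} then goes through.
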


We shall show that the lower bound of the eigenvalue of
$\bar{\partial}^*\bar{\partial}$ on $T^{1,0}$ vector fields, which
was introduced in \cite{PhSt}, gives rise to an estimate of $\delta$
in Theorem \ref{t2}. As a consequence, we prove the following
generalization of a theorem of Phong, Song, Sturm and Weinkove
\cite{PSSW1}, where the condition of lower bounded Mabuchi K-energy
weakened to the vanishing of the Futaki invariant.

\begin{theorem}\label{t3}
Let $(M,g(t)),t\in[0,\infty)$, be a solution to the K\"ahler-Ricci flow {\rm (\ref{KRF})} whose K\"ahler form lies in $\pi c_1(M)$. Let $\mu(t)$ be the smallest positive eigenvalue of $-g^{i\bar{j}}\nabla_i\nabla_{\bar{j}}$ acting on smooth $T^{1,0}$ vector fields at time $t$. Suppose that
\begin{itemize}
 \item[(3.1)] the Futaki invariant vanishes on $\pi c_1(M)$, and
 \item[(3.2)] $\mu(t)\geq c$ for a uniform constant $c>0$,
\end{itemize}
then $g(t)$ converges exponentially fast in the $C^\infty$ sense to
a K\"ahler-Einstein metric.
\end{theorem}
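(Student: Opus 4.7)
The plan is to reduce Theorem \ref{t3} to Theorem \ref{t2} by deducing condition (2.2) from (3.2), since (3.1) coincides with (2.1). Concretely, I will show that the uniform bound $\mu(t) \geq c > 0$ forces $\lambda(t) \geq 1 + \delta$ for some uniform $\delta > 0$ depending only on $c$ and Perelman's $C^0$-bound on $u(t)$ along the flow.

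Let $f$ be a real eigenfunction of $L$ at time $t$ with $Lf = \lambda f$ and $\lambda \neq 1$. By the Poincar\'e inequality \eqref{poincare}, either $\lambda = 0$ (so $f$ is constant and is discarded) or $\lambda > 1$. Set $X^i = g^{i\bar j}\partial_{\bar j} f$, the $(1,0)$-gradient of $f$. The core of the argument is the Bochner--Kodaira-type identity in the weighted $L^2$-inner product,
\begin{equation*}
\int_M |\bar\partial X|^2 \, e^{-u}\, dv_{g(t)} \;=\; (\lambda - 1)\int_M |X|^2\, e^{-u}\, dv_{g(t)},
\end{equation*}
which follows by commuting covariant derivatives in $\partial_{\bar k}X^i = g^{i\bar l}\nabla_{\bar k}\nabla_{\bar l} f$, integrating by parts with the weight $e^{-u}$, and using the Ricci-potential equation \eqref{RP}, $R_{i\bar j} + \partial_i\partial_{\bar j}u = g_{i\bar j}$, to cancel the curvature term. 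The factor $\lambda - 1$ records the cancellation between the eigenvalue relation $Lf = \lambda f$ and the soliton-type identity \eqref{RP}.

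Next, since $\lambda \neq 1$, a short weighted integration by parts (using that every holomorphic $(1,0)$ vector field on a Fano manifold is the $(1,0)$-gradient of a smooth complex-valued potential, by Hodge theory) shows that $X$ is weighted-$L^2$-orthogonal to the space of holomorphic vector fields. Perelman's uniform $C^0$-bound on $u(t)$ makes the weighted and unweighted Hermitian $L^2$-norms uniformly equivalent, so the vector-field eigenvalue bound (3.2) propagates to
\begin{equation*}
\int_M |\bar\partial X|^2\, e^{-u}\, dv_{g(t)} \;\geq\; C^{-1}\mu(t)\int_M |X|^2\, e^{-u}\, dv_{g(t)} \;\geq\; C^{-1}c\int_M |X|^2\, e^{-u}\, dv_{g(t)}
\end{equation*}
for a constant $C$ independent of $t$. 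Combining this with the Bochner identity yields $\lambda - 1 \geq C^{-1}c$ uniformly in $t$, which is precisely (2.2) with $\delta := C^{-1}c$; Theorem \ref{t2} then delivers the exponential $C^\infty$-convergence of $g(t)$ to a K\"ahler-Einstein metric.

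The main technical obstacle is the Bochner--Kodaira computation producing the clean coefficient $\lambda - 1$: one must carefully commute covariant derivatives on $\bar\partial X$, apply the weighted divergence theorem, and use \eqref{RP} to absorb the Ricci terms at just the right step. The remaining ingredients---the comparability of weighted and unweighted norms via Perelman's estimates, and the identification of holomorphic vector fields with gradients of smooth potentials on a Fano manifold---are standard in the K\"ahler-Ricci flow literature.
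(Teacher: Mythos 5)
Your proposal follows essentially the same route as the paper: the weighted Bochner identity $(\lambda-1)\int_M|\nabla\psi|^2e^{-u}dv=\int_M|\nabla_i\nabla_j\psi|^2e^{-u}dv$ for an eigenfunction of $L$, combined with transferring the unweighted bound $\mu(t)\geq c$ to the weighted setting via Perelman's $C^0$-estimate on $u$, giving $\lambda\geq 1+e^{-\osc(u)}\mu$ and reducing everything to Theorem \ref{t2}. The one place you are too quick is the ``propagation'' step: uniform equivalence of the weighted and unweighted norms does not by itself transfer an eigenvalue bound between the two \emph{different} orthogonal complements of $h^0$ (with respect to $\langle\cdot,\cdot\rangle_u$ versus $\langle\cdot,\cdot\rangle_0$); one needs the short decomposition argument of the paper's Lemma \ref{l32} (write $X=W+\xi$ with $\xi\in h^0$ and $W\perp_0 h^0$, use $\bar\partial\xi=0$ together with $\langle X,X\rangle_u\leq\langle W,W\rangle_u$), which is precisely the standard ingredient you defer to.
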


Finally, we consider the modified K\"ahler-Ricci flow. We adopt the
notion in \cite{PSSW3}. Let $M$ be a compact K\"ahler manifold with
$c_1(M)>0$ and $X$ be a holomorphic vector field whose imaginary
part $Im(X)$ induces an $S^1$ holomorphic action on $M$. Let $g_0$
be an invariant metric whose K\"ahler class is $\pi c_1(M)$. Then
the modified K\"ahler-Ricci flow with respect to the holomorphic
vector field $X$, with initial metric $g(0)=g_0$, is defined by, cf.
\cite{PSSW3}:
\begin{equation}\label{MKRF}
\frac{\partial}{\partial t}g_{i\bar{j}}=-R_{i\bar{j}}+g_{i\bar{j}}+g_{k\bar{j}}\nabla_iX^k.
\end{equation}
This flow preserves the K\"ahler class as well as the invariance by
$S^1$ action. Furthermore, the modified flow is just the
reparametrization of the K\"ahler-Ricci flow (\ref{KRF}) by
holomorphic transformations generated by $Im(X)$. Thus, it still
exists for all time $t\in[0,\infty)$. By a slight modification, we
can prove the similar convergence results for the modified
K\"ahler-Ricci flow; see \S 4 for more details. Here we just state
the following generalization of the main theorem of \cite{PSSW3},
which gives a complete solution to the Question (3) in \cite[\S
8]{PSSW3}.

\begin{theorem}\label{t4}
Let $M$ be a compact K\"ahler manifold with $c_1(M)>0$ and $X$ be a
holomorphic vector field. Let $g(t),t\in[0,\infty)$, be a solution
to the modified K\"ahler-Ricci flow with respect to $X$. Let
$\mu(t)$ be the smallest positive eigenvalue of
$-g^{i\bar{j}}\nabla_i\nabla_{\bar{j}}$ acting on smooth $T^{1,0}$
vector fields at time $t$. Suppose that
\begin{itemize}
 \item[(4.1)] the modified Futaki invariant vanishes on $\pi c_1(M)$, and
 \item[(4.2)] $\mu(t)\geq c$ for a uniform constant $c>0$,
\end{itemize}
then $g(t)$ converges exponentially fast in the $C^\infty$ sense to
a K\"ahler-Ricci soliton with respect to the holomorphic vector
field $X$.
\end{theorem}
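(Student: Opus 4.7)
The plan is to parallel the strategy of Theorem \ref{t3} in the modified setting, exploiting the fact emphasized in the excerpt that the modified K\"ahler-Ricci flow is merely a reparametrization of the usual K\"ahler-Ricci flow by the holomorphic transformations generated by $\operatorname{Im}(X)$. The proof decomposes into the modified analogs of Theorems \ref{t1}, \ref{t2}, and \ref{t3}, together with a verification that the hypotheses propagate properly.

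First, I would introduce a modified Ricci potential $u(t)$, a smooth $\operatorname{Im}(X)$-invariant function satisfying
\begin{equation*}
R_{i\bar j}(g(t))+\partial_i\bar\partial_j u(t)=g_{i\bar j}(t)+g_{k\bar j}\nabla_i X^k,
\end{equation*}
normalized by $\frac{1}{\V}\int_M e^{-u(t)}dv_{g(t)}=1$, and let $a(t)$ be its weighted average as in (\ref{average}). A limit K\"ahler-Ricci soliton with respect to $X$ is characterized by $u(t)-a(t)\to 0$ in $C^\infty$. I would then prove the modified version of Theorem \ref{t1}: if the strict weighted Poincar\'e inequality
\begin{equation*}
\int_M|\nabla u(t)|^2e^{-u(t)}dv_{g(t)}\geq (1+\delta)\int_M(u(t)-a(t))^2e^{-u(t)}dv_{g(t)}
\end{equation*}
holds uniformly in $t$, then $u(t)-a(t)$ decays exponentially in $C^0$, and by parabolic bootstrapping in $C^\infty$. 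The arguments of \S 2 should transfer essentially verbatim, since the drift term $X$ is tangent to $\operatorname{Im}(X)$-invariant level sets and the relevant integration-by-parts identities are unchanged once one works with the soliton Ricci potential.

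Second, I would establish the modified analogs of Theorems \ref{t2} and \ref{t3}: vanishing of the modified Futaki invariant together with $\mu(t)\geq c$ implies the strict Poincar\'e inequality above. The natural second-order operator here is
\begin{equation*}
L_X f=-\triangle f+g^{i\bar j}\nabla_i u\,\nabla_{\bar j}f-X(f),
\end{equation*}
whose eigenfunctions with eigenvalue $1$ (in the weighted $L^2$ with measure $e^{-u}dv_g$) are exactly the holomorphy potentials of holomorphic vector fields commuting with $X$, by the Bochner-type identity adapted to the soliton setting. As in the unmodified case, a uniform lower bound $\mu(t)\geq c$ on the $T^{1,0}$ eigenvalue translates, via this Bochner argument, into a uniform positive gap between $1$ and the next eigenvalue of $L_X$. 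The vanishing of the modified Futaki invariant forces $u(t)-a(t)$ to remain $L^2$-orthogonal to the $1$-eigenspace of $L_X$, so the spectral gap supplies the desired constant $\delta>0$.

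The main obstacle is this last orthogonality: verifying that along the modified flow the component of $u(t)-a(t)$ in the $1$-eigenspace of $L_X$ is pinned to zero by the vanishing of the modified Futaki invariant. Concretely, one must compute $\int_M Y(u(t))e^{-u(t)}dv_{g(t)}$ for each holomorphic $Y$ commuting with $X$, identify it with the modified Futaki invariant evaluated on $Y$, and hence conclude it vanishes. One also has to confirm that the adapted Bochner identity producing the gap from $\mu(t)\geq c$ holds for $L_X$ with the correct drift; the computations parallel those in the unmodified case but carry an extra Lie-derivative term in $X$, which is where the $\operatorname{Im}(X)$-invariance of $u(t)$ becomes crucial. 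Once these identities are in place, the exponential decay of $u(t)-a(t)$ and standard parabolic regularity yield $C^\infty$ convergence to a K\"ahler-Ricci soliton with respect to $X$.
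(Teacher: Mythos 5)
Your overall architecture (reduce to a strict weighted Poincar\'e inequality for the modified Ricci potential, then derive that inequality from the vanishing of the modified Futaki invariant plus the eigenvalue bound on vector fields) matches the paper's, and your bookkeeping differences are harmless: folding $\theta$ into the potential and the drift $-X$ into the operator makes your $u$ the paper's $w=u+\theta$ and your $L_X$ the paper's $L$. The genuine gap is in the step you yourself single out as the main obstacle: the claim that vanishing of ${\tt{F}}_X$ pins the component of $w-a_X$ in the $1$-eigenspace to zero. This is false, because of a mismatch of measures. The eigenfunction expansion lives in $L^2(d\rho)$ with $d\rho=\frac{1}{\V}e^{-u}dv$, so killing the $1$-eigenspace component $w_0$ would require $\int_M Y(w)e^{-u}dv=0$ for every holomorphic gradient field $Y$; but the modified Futaki invariant is ${\tt{F}}_X(Y)=\int_M Y(w)e^{\theta}dv$, a pairing against the different measure $e^{\theta}dv=e^{w}e^{-u}dv$. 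Its vanishing only gives $\pi_\theta(\nabla w)=0$, i.e.\ orthogonality of $\nabla w$ to $h^0$ in the inner product $\langle\,,\rangle_\theta$, not in the one used for the spectral decomposition; in general $w_0\neq0$ along the flow, so ``the spectral gap supplies the desired constant'' does not go through as stated.

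The paper's Proposition \ref{p41} (modeled on Lemma \ref{l31} in the unmodified case) exists precisely to bridge this mismatch: from $\pi_\theta(\nabla w)=0$ a Schwarz-inequality argument yields $\langle\nabla w_0,\nabla w_0\rangle_\theta\leq\langle\nabla w_+,\nabla w_+\rangle_\theta$, and comparing the two measures costs a factor $e^{\osc(w)}$ (controlled by Perelman-type estimates), producing the strict Poincar\'e constant $\delta^{'}=\delta/(1+e^{\osc(w)}+\delta e^{\osc(w)})>0$ rather than the full gap $\delta$. Your proof needs this lemma, or an equivalent substitute, inserted where you currently assert orthogonality. A second, smaller issue of the same flavor: relating $\mu(t)$, defined with the unweighted measure $dv$, to the gap of $L$ in $L^2(e^{-u}dv)$ also costs a factor $e^{-\osc(u)}$; this is the content of Lemma \ref{l32} and the estimate (\ref{e33}), and your ``adapted Bochner identity'' must be supplemented by that measure comparison as well. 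With these two repairs your outline coincides with the paper's proof.
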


We refer to \cite{TiZh1} or \cite{PSSW3} for the definition and
basic properties of the modified Futaki invariant.

The paper is organized as follows: In \S 2, we prove Theorem
\ref{t1}; In \S 3, we consider how the Futaki invariant effects the
Poincar\'e inequality and give a proof of Theorem \ref{t2} and
\ref{t3}; In \S 4, we consider the modified K\"ahler-Ricci flow and
prove Theorem \ref{t4}; In \S 5, we give some further remarks.

\medskip

\noindent {\bf Acknowledgement:} The research was initiated when the author was visiting Professor G. Tian in Princeton University in 2009. He author would like to thank the Department of Mathematics for the hospitality; he would like to thank Professor G. Tian for his helpful discussion and constant help. The author also would like to thank Professors D. H. Phong, J. Song, J. Sturm and B. Weinkove for their interests to this paper, especially Professor D. H. Phong for his suggestions to improve the first version of the paper. Thanks also gives K. Zheng for sending his paper \cite{Zheng}. Finally, The author thanks Y.G. Zhang for his discussion in the course of writing the paper.

\section{K\"ahler-Ricci flow and Poincar\'e inequality}

Let $(M,g_0)$ be a compact K\"ahler manifold of dimension
$\dim_{\mathbb{C}}M=n$ with associated K\"ahler form $\omega_0$.
Suppose $c_1(M)>0$ and $\omega_0\in\pi c_1(M)$. Let
$g(t),t\in[0,\infty)$, be the solution to the K\"ahler-Ricci flow
(\ref{KRF}). Let $\V$ be the volume of the K\"ahler-Ricci flow and
$u(t)$ be the normalized Ricci potentials satisfying (\ref{RP}) and
(\ref{RPN}). Then it is an easy check that $u(t)$ satisfies the
evolution
\begin{eqnarray}
\frac{\partial }{\partial t}u=\triangle u+u-a,
\end{eqnarray}
where $a$ is the average of $u$ defined in (\ref{average}). By
Perelman, cf. \cite{SeTi} for a proof, the Ricci potentials $u(t)$
satisfy the uniform bound under the K\"ahler-Ricci flow:
\begin{equation}\label{Pe}
\|u(t)\|_{C^0}+\|\nabla u(t)\|_{C^0}+\|\triangle u(t)\|_{C^0}\leq C.
\end{equation}
Furthermore, the metrics $g(t)$ are uniformly volume noncollapsed:
\begin{equation}
\Vol_{g(t)}(B_{g(t)}(x,r))\geq\kappa r^{2n},\hspace{0.3cm}\forall
t>0\mbox{ and }r\leq 1.
\end{equation}
Here $C$ and $\kappa$ are uniform positive constants depending only
on $g_0$.

Now we turn to the proof of Theorem \ref{t1}. Several lemmas are
needed.

First of all, by a direct computation, cf. Remark (1) in \cite[\S 6]{PSSW1},
\begin{equation}\label{a dirivative}
\frac{d a}{dt}(t)=\frac{1}{\V}\int_M(|\nabla
u|^2-(u-a)^2)e^{-u}dv_{g(t)}.
\end{equation}
Introduce $$Y=\frac{1}{\V}\int_M(u-a)^2e^{-u}dv,\hspace{0.3cm}
Z=\frac{da}{dt}=\frac{1}{\V}\int_M(|\nabla
u|^2-(u-a)^2)e^{-u}dv_{g}$$ at each time $t$.

\begin{lemma}\label{l21}
For any K\"ahler-Ricci flow we have $Z(t)\rightarrow 0$ as
$t\rightarrow\infty.$
\end{lemma}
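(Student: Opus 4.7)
The plan is to establish two facts about $Z$, namely that it is integrable on $[0,\infty)$ and that it is uniformly Lipschitz in $t$, and then to combine these with the nonnegativity of $Z$ to conclude $Z(t)\to 0$.

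For integrability: applying the Poincar\'e inequality (\ref{poincare}) with $f=u$ and $\bar f=a$ yields $Z(t)\geq 0$ for every $t$. Since $Z=a'(t)$ by (\ref{a dirivative}), the function $a$ is nondecreasing; Perelman's estimate (\ref{Pe}) gives $\|u(t)\|_{C^0}\leq C$, hence $|a(t)|\leq C$. Consequently the limit $a(\infty):=\lim_{t\to\infty}a(t)$ exists and
\[
\int_0^\infty Z(t)\,dt=a(\infty)-a(0)<\infty.
\]

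For the Lipschitz bound I would compute $Z'(t)$ directly from $Z=\tfrac{1}{\V}\int_M|\nabla u|^2 e^{-u}dv_{g(t)}-Y$. Three ingredients enter: the evolution $u_t=\triangle u+u-a$ of the potential, the identity $\tfrac{d}{dt}(e^{-u}dv_{g(t)})=(a-u)e^{-u}dv_{g(t)}$ (which follows from $\tfrac{d}{dt}dv_{g(t)}=(n-R)dv_{g(t)}$ together with $R=n-\triangle u$), and the metric evolution $\tfrac{d}{dt}g^{i\bar j}=g^{ip}g^{q\bar j}R_{p\bar q}-g^{i\bar j}$. Differentiating both pieces of $Z$ produces integrals involving $u,\nabla u,\triangle u$, as well as terms containing $R_{i\bar j}$ and $\Hess u$. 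Substituting $R_{i\bar j}=g_{i\bar j}-u_{i\bar j}$ from (\ref{RP}) converts every Ricci term into a Hessian term, and each Hessian term can then be absorbed by integration by parts against the weighted measure $e^{-u}dv_{g(t)}$: derivatives fall onto $u$, $\nabla u$ or $e^{-u}$, producing expressions polynomial in the scalar quantities $u$, $\nabla u$, $\triangle u$ only. Perelman's estimate (\ref{Pe}) then gives $|Z'(t)|\leq C$ uniformly in $t$.

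Once $Z\geq 0$, $\int_0^\infty Z\,dt<\infty$, and $Z$ is uniformly Lipschitz, the conclusion $Z(t)\to 0$ follows from the standard observation that a uniformly continuous integrable nonnegative function on $[0,\infty)$ must tend to zero: if $Z(t_k)\geq \varepsilon$ for a sequence $t_k\to\infty$, the Lipschitz bound forces $Z(t)\geq \varepsilon/2$ on intervals of definite length centered at the $t_k$, contradicting the finiteness of $\int_0^\infty Z\,dt$.

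The main obstacle is the computation of $Z'(t)$: the naive differentiation produces terms with $R_{i\bar j}$ and $\Hess u$, neither of which is bounded along a general K\"ahler-Ricci flow with only (\ref{Pe}) available. The crux of the computation is the rearrangement via $R_{i\bar j}=g_{i\bar j}-u_{i\bar j}$ followed by integration by parts against $e^{-u}dv_{g(t)}$, which removes every Hessian of $u$ in favour of quantities controlled by Perelman.
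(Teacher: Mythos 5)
Your overall architecture matches the paper's: $Z\geq 0$ by Poincar\'e, $\int_0^\infty Z\,dt=\lim a(t)-a(0)<\infty$ by Perelman's $C^0$ bound on $u$, a derivative bound on $Z$, and the standard ``integrable plus controlled derivative implies decay'' argument. The gap is in your central claim, which you yourself identify as the crux: that integration by parts against $e^{-u}dv$ converts \emph{every} Hessian term into expressions polynomial in $u$, $\nabla u$, $\triangle u$, yielding the two-sided bound $|Z'(t)|\leq C$. This is not correct as described. Differentiating $\int|\nabla u|^2e^{-u}dv$ produces, via the Bochner-type evolution $\partial_t|\nabla u|^2=\triangle|\nabla u|^2-|\nabla\nabla u|^2-|\nabla\bar\nabla u|^2+|\nabla u|^2$, the squared-Hessian integrals $\int|\nabla\nabla u|^2e^{-u}dv$ and $\int|\nabla\bar\nabla u|^2e^{-u}dv$. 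Integrating these by parts does not terminate in scalar quantities: it generates third derivatives of $u$, curvature commutator terms, and cubic terms such as $\int u_{i\bar j}u^iu^{\bar j}e^{-u}dv$, which at best can be handled by a Cauchy--Schwarz self-absorption argument that you have not carried out (and which is delicate for the $(2,0)$-Hessian, where Ricci terms appear). So the asserted two-sided Lipschitz bound is not established.

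The fix is that you do not need it, and the paper does not prove it. The dangerous terms $-|\nabla\nabla u|^2-|\nabla\bar\nabla u|^2$ enter $\frac{dZ}{dt}$ with a \emph{negative} sign, so they can simply be discarded when bounding $\frac{dZ}{dt}$ from \emph{above}; the remaining terms (after converting $\int\triangle F\,e^{-u}dv$ into $\int F(|\nabla u|^2-\triangle u)e^{-u}dv$ and using $\partial_t(e^{-u}dv)=-(u-a)e^{-u}dv$) are all controlled by Perelman's estimate (\ref{Pe}). A one-sided bound $\frac{dZ}{dt}\leq C$ suffices for your concluding step: if $Z(t_k)\geq\varepsilon$, then $Z\geq\varepsilon/2$ on the left-sided intervals $[t_k-\varepsilon/(2C),t_k]$, which already contradicts integrability. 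You should replace the claimed reduction of all Hessian terms by this sign observation; as written, the key estimate is asserted but not proved, and the mechanism proposed for it would not go through.
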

\begin{proof}
By Poincar\'e inequality (\ref{poincare}), $Z(t)\geq 0$ for any $t$.
Then observe that
$$\int_0^\infty
Z(t)dt=\lim_{t\rightarrow\infty}a(t)-a(0)<\infty.$$ To show
$Z(t)\rightarrow 0$, it suffices to prove that $\frac{d Z}{dt}$ is
uniformly bounded from above. Recall that the evolution of $|\nabla
u|^2$ is given by, cf. \cite{SeTi},
\begin{equation}\nonumber
\frac{\partial}{\partial t}|\nabla u|^2=\triangle |\nabla
u|^2-|\nabla\nabla u|^2-|\nabla\bar{\nabla}u|^2+|\nabla u|^2.
\end{equation}
Thus, by Perelman's estimate (\ref{Pe}),
\begin{eqnarray}
\frac{d Z}{dt}&=&\frac{d}{dt}\frac{1}{\V}\int_M(|\nabla u|^2-(u-a)^2)e^{-u}dv_{g(t)}\nonumber\\
&=&\frac{1}{\V}\int_M\big[\triangle|\nabla u|^2-|\nabla\nabla u|^2-|\nabla\bar{\nabla}u|^2+3|\nabla u|^2-\triangle(u-a)^2\nonumber\\
&&-2(u-a)^2-(|\nabla u|^2-(u-a)^2)(u-a)\big](2\pi)^{-n}e^{-u}dv\nonumber\\
&=&\frac{1}{\V}\int_M\big[-|\nabla\nabla u|^2-|\nabla\bar{\nabla}u|^2+3|\nabla u|^2-2(u-a)^2\nonumber\\
&&+(|\nabla u|^2-(u-a)^2)(-\triangle u+|\nabla
u|^2-u+a)\big](2\pi)^{-n}e^{-u}dv\nonumber
\end{eqnarray}
is uniformly bounded from above. Here, in the second equality, we used $$\frac{\partial}{\partial t}(e^{-u}dv)=(-\triangle u-u+a+n-s)e^{-u}dv=-(u-a)e^{-u}dv$$ since the Ricci potential satisfies $s+\triangle u=n$.
\end{proof}

\begin{lemma}\label{l22}
Assume as in Theorem \ref{t1}, then $Y(t)\rightarrow 0$ as
$t\rightarrow\infty$.
\end{lemma}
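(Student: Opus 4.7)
The plan is to observe that condition (1.1) can be rewritten as a direct comparison between $Z(t)$ and $Y(t)$, at which point Lemma \ref{l21} forces $Y(t)\to 0$.

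More precisely, I would first divide condition (1.1) by $\V$ to get
\[
\frac{1}{\V}\int_M|\nabla u|^2e^{-u}dv_{g(t)}\geq(1+\delta)Y(t).
\]
Subtracting $Y(t)$ from both sides and recognizing the left-hand side as the definition of $Z(t)+Y(t)$, this is equivalent to
\[
Z(t)\geq\delta\,Y(t)
\]
for every $t\geq 0$, with $\delta>0$ the uniform constant from the hypothesis.

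Then I would simply apply Lemma \ref{l21}: since $Z(t)\to 0$ and $Y(t)\geq 0$, the inequality $0\leq Y(t)\leq Z(t)/\delta$ forces $Y(t)\to 0$ as $t\to\infty$.

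There is essentially no obstacle here; the content of the lemma is entirely packed into the earlier Lemma \ref{l21} (whose proof does require Perelman's estimates and the evolution of $|\nabla u|^2$). The strict Poincaré inequality (1.1) is precisely what is needed to upgrade the nonnegative gap $Z=\frac{1}{\V}\int(|\nabla u|^2-(u-a)^2)e^{-u}dv$ into a quantitative bound on the variance $Y$ itself, and no further analysis of the flow is required at this step.
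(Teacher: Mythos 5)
Your proof is correct and is essentially identical to the paper's own argument: the paper likewise applies condition (1.1) to the formula for $\frac{da}{dt}$ to obtain $Z\geq\delta Y$ and then invokes Lemma \ref{l21}. No issues.
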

\begin{proof}
Applying the assumption (\ref{t1}) to (\ref{a dirivative}) gives
$$Z=\frac{da}{dt}\geq\frac{\delta}{V}\int_M(u-a)^2e^{-u}dv.$$
Then use above lemma.
\end{proof}

The following lemma gives the $C^0$ estimate of $(u-a)$ in terms of its $L^2$ estimate through Perelman's gradient estimate.

\begin{lemma}\label{l23}
There exists a constant $A$ depending only on $g_0$ such that
\begin{equation}\nonumber
\|u-a\|_{C^0}\leq AY^{\frac{1}{2n+2}}
\end{equation}
at any time.
\end{lemma}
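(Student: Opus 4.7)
The plan is to exploit Perelman's uniform gradient estimate for $u$ together with his volume noncollapsing, to turn the integral control on $u-a$ (i.e.\ the smallness of $Y$) into a pointwise control. The estimate is really a general interpolation: if a function on a real $2n$-dimensional noncollapsed manifold has a uniform Lipschitz constant and small weighted $L^{2}$ norm, then it must be small pointwise, with an exponent dictated by the dimension.

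First I would note that by Perelman's estimate (\ref{Pe}) we have $\|u(t)\|_{C^{0}} \le C$, so $|a(t)|\le C$ and hence $m := \|u-a\|_{C^{0}}\le 2C$ uniformly. In particular, the claimed inequality is nontrivial only for small $m$, and we may assume $m \le 2C$. Let $x_{0}\in M$ be a point where $|u-a|(x_{0}) = m$. Since $|\nabla(u-a)| = |\nabla u|\le C$ by (\ref{Pe}), the mean value inequality gives $|u-a|(x) \ge m/2$ for every $x\in B_{g(t)}(x_{0}, r)$ with $r := m/(2C)$. By our assumption $r\le 1$.

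Next I would integrate this lower bound. Using $e^{-u}\ge e^{-C}$, we obtain
\begin{equation*}
\V\cdot Y(t) \;=\; \int_{M}(u-a)^{2}e^{-u}\,dv_{g(t)} \;\ge\; e^{-C}\,\frac{m^{2}}{4}\,\Vol_{g(t)}\bigl(B_{g(t)}(x_{0},r)\bigr).
\end{equation*}
Now Perelman's noncollapsing gives $\Vol_{g(t)}(B_{g(t)}(x_{0},r))\ge \kappa r^{2n}$, so substituting $r = m/(2C)$ yields
\begin{equation*}
Y(t) \;\ge\; \frac{e^{-C}\kappa}{4\V (2C)^{2n}}\,m^{2n+2}.
\end{equation*}
Taking the $(2n+2)$-th root and absorbing constants into $A$ gives $m\le AY^{1/(2n+2)}$, as required. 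The same constant $A$ depends only on the Perelman constants $C,\kappa$ and on $\V$, all of which are determined by $g_{0}$.

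There is no real obstacle: the two ingredients—uniform gradient bound and uniform noncollapsing along the K\"ahler-Ricci flow—are precisely Perelman's results recorded at the start of $\S 2$, and the combinatorial step is elementary. The only mild subtlety worth flagging is making sure the radius $r = m/(2C)$ lies in the range where noncollapsing applies, which is taken care of by the a priori bound $m\le 2C$ that itself follows from Perelman's $C^{0}$ bound on $u$.
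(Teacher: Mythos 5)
Your proof is correct and is essentially the paper's argument: the paper simply cites equation (4.7) of \cite{PSSW1} combined with Perelman's $C^0$ bound, and that cited inequality is proved by exactly the interpolation you carry out (Lipschitz bound from $\|\nabla u\|_{C^0}\leq C$ on a ball of radius $r=m/(2C)\leq 1$, then $\kappa$-noncollapsing to bound the volume of that ball from below). Your version is just the self-contained write-up of the same idea, with the constants tracked explicitly.
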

\begin{proof}
Combine equation (4.7) in \cite{PSSW1} with Perelman's $C^0$
estimate of $u$ (\ref{Pe}).
\end{proof}

The following lemma is an essential one in the proof of Theorem
\ref{t1}. We remark that it has appeared in \cite{ChTi1},
\cite{ChTi2} and \cite{ChLiWa} when additional conditions are given.

\begin{lemma}\label{l24}
Assume as in Theorem \ref{t1}, then there exist positive constants
$\gamma$ and $B$ depending on $g(0)$ and $\delta$ such that
\begin{equation}\nonumber
Y(t)\leq  B e^{-\gamma t},\hspace{0.5cm}\forall t\in[0,\infty).
\end{equation}
\end{lemma}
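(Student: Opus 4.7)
The strategy is to derive a differential inequality of the form $\frac{dY}{dt}\leq -\delta Y$ valid for $t$ large, and then glue with a uniform bound of $Y$ on an initial compact time interval to obtain global exponential decay. The key point is that the quadratic correction terms in the evolution of $Y$ can be controlled by $\|u-a\|_{C^0}$, which we already know tends to $0$ thanks to Lemmas \ref{l22} and \ref{l23}.

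First, I would differentiate $Y$ directly. Using $\partial_t u=\triangle u+u-a$, the identity $\partial_t(e^{-u}dv)=-(u-a)e^{-u}dv$ (from the proof of Lemma \ref{l21}), the vanishing moment $\int_M(u-a)e^{-u}dv=0$, and the weighted integration by parts
\begin{equation*}
\int_M(u-a)\triangle u\cdot e^{-u}dv=-\int_M|\nabla u|^2 e^{-u}dv+\int_M(u-a)|\nabla u|^2 e^{-u}dv,
\end{equation*}
the computation collapses to
\begin{equation*}
\frac{dY}{dt}=-2Z+\frac{2}{\V}\int_M(u-a)|\nabla u|^2 e^{-u}dv-\frac{1}{\V}\int_M(u-a)^3 e^{-u}dv.
\end{equation*}
The two remaining integrals are each bounded by factoring out $\|u-a\|_{C^0}$ and using the identity $\frac{1}{\V}\int_M|\nabla u|^2 e^{-u}dv=Z+Y$, producing
\begin{equation*}
\frac{dY}{dt}\leq -2\bigl(1-\|u-a\|_{C^0}\bigr)Z+3\|u-a\|_{C^0}\,Y.
\end{equation*}

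Combining this with hypothesis (1.1) in the form $Z\geq\delta Y$ yields
\begin{equation*}
\frac{dY}{dt}\leq -\bigl(2\delta-(2\delta+3)\|u-a\|_{C^0}\bigr)\,Y.
\end{equation*}
Now Lemmas \ref{l22} and \ref{l23} give $\|u-a\|_{C^0}(t)\leq AY(t)^{1/(2n+2)}\to 0$, so there exists a finite $T_0$ beyond which $\|u-a\|_{C^0}\leq\delta/(2\delta+3)$, and hence $\frac{dY}{dt}\leq -\delta Y$. Gr\"onwall gives $Y(t)\leq Y(T_0)e^{-\delta(t-T_0)}$ for $t\geq T_0$, while on $[0,T_0]$ the quantity $Y$ is uniformly bounded by Perelman's estimate (\ref{Pe}); absorbing constants yields $Y(t)\leq Be^{-\gamma t}$ with $\gamma=\delta$ and $B$ depending on $g_0$ and $\delta$.

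The principal obstacle is estimating the cubic correction terms in $\frac{dY}{dt}$. A direct Cauchy--Schwarz bound would only give $\int_M(u-a)|\nabla u|^2 e^{-u}dv=O(Y^{1/2})$, which is too weak to close a differential inequality for $Y$ alone. The remedy is to invoke Perelman's $C^0$ bound on $|\nabla u|$ only \emph{inside} $Z+Y$ and to pull $\|u-a\|_{C^0}$ out as the small factor; the exponential regime is therefore reached only once $Y$ has become small enough, which is precisely the role of the preparatory Lemmas \ref{l21}--\ref{l23}.
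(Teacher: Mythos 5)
Your proposal is correct and follows essentially the same route as the paper: differentiate $Y$, use $\partial_t(e^{-u}dv)=-(u-a)e^{-u}dv$ and weighted integration by parts to reach $\frac{dY}{dt}=\frac{1}{\V}\int_M\bigl[2(u-a)|\nabla u|^2-2|\nabla u|^2+2(u-a)^2-(u-a)^3\bigr]e^{-u}dv$, extract $\|u-a\|_{C^0}$ as the small factor, and invoke (1.1) together with Lemmas \ref{l22}--\ref{l23} to get $\frac{dY}{dt}\leq-\delta Y$ for large $t$. Your rephrasing via $Z\geq\delta Y$ is equivalent to the paper's direct use of (1.1), and your explicit Gr\"onwall-plus-gluing step only makes explicit what the paper leaves implicit.
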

\begin{proof}
By Lemma \ref{l22} and Lemma \ref{l23}, $\|u-a\|_{C^0}\rightarrow 0$
as $t\rightarrow\infty$. Thus,
\begin{eqnarray}
\frac{d}{dt}Y&=&\frac{1}{\V}\int_M\big[2(u-a)(\triangle u+u-a-\frac{da}{dt})-(u-a)^3\big]e^{-u}dv\nonumber\\
&=&\frac{1}{\V}\int_M\big[2(u-a)|\nabla u|^2-2|\nabla u|^2+2(u-a)^2-(u-a)^3\big]e^{-u}dv\nonumber\\
&\leq&\frac{1}{\V}\int_M\big[(-2+2\|u-a\|_{C^0})|\nabla u|^2+(2+\|u-a\|_{C^0})(u-a)^2\big]e^{-u}dv\nonumber\\
&\leq&\big((-2+2\|u-a\|_{C^0})(1+\delta)+(2+\|u-a\|_{C^0})\big)Y\nonumber\\
&\leq&-\delta\cdot Y\nonumber
\end{eqnarray}
whenever $t$ is large enough, where we used the assumption (1.1) in
the second inequality. This suffices to complete the proof of the
lemma.
\end{proof}

Now we are ready to conclude the proof of Theorem \ref{t1}.

\begin{proof}[Proof of Theorem \ref{t1}]
One can use the argument of \cite{PSSW1} to give a proof of the theorem, namely first get the exponential decay of $(s-n)$ by \cite[Lemma 1]{PSSW1} and then apply \cite[Lemma 6]{PSSW1}. Or, adopt another direct proof as follows.

Let $g(t),t\in[0,\infty)$, be the solution to the K\"ahler-Ricci
flow and $u(t)$ be associated Ricci potentials as above. Define a
family of functions
$$\phi(x,t)=\int_0^t(u(x,s)-a(s))ds,\hspace{0,3cm}\forall x\in M,t\geq 0.$$ It is an easy check that
$\phi(t)$ is nothing but the relative K\"ahler potential of $g(t)$
in the sense that
$g_{i\bar{j}}(t)=g_{i\bar{j}}(0)+\partial_i\partial_{\bar{j}}\phi(t)$.
Actually,
$$\partial_i\partial_{\bar{j}}\phi(t)=\int_0^t\partial_i\partial_{\bar{j}}u(s)ds=\int_0^t\frac{\partial g_{i\bar{j}}}{\partial
s}ds=g_{i\bar{j}}(t)-g_{i\bar{j}}(0).$$ Combining with Lemma
\ref{l23} and Lemma \ref{l24} gives the exponential decay of
$\|u-a\|_{C^0}$. Hence, the potentials $\phi(t)$ is uniformly
bounded on $M\times[0,\infty)$. By Yau's resolution of Calabi
conjecture \cite{Ya}, see also \cite{Ti}, the metrics $g(t)$ are all
$C^\infty$ equivalent to each other. By Lemma 1 of \cite{PSSW1},
$\|\nabla u\|_{C^0}$ also decays exponentially, which in turn
implies, by the argument in \cite{PhSt} and \cite{PSSW1}, the
exponential decay of $\|\frac{\partial g}{\partial
t}\|_{C^k}=\|Ric-g\|_{C^k}$ for any $k$. This completes the proof of
the Theorem.
\end{proof}

\section{K\"ahler-Ricci flow and Futaki invariant}

Let $(M,g)$ be any compact K\"ahler manifold whose K\"ahler form
lies in $\pi c_1(M)$ and $u$ be the normalized Ricci potential
satisfying (\ref{RP}) and (\ref{RPN}). Denote by $\V$ its volume.

Denote by $h^0$ the space of holomorphic vector fields on $M$. The
Futaki invariant on the K\"ahler class $\pi c_1(M)$, say
${\tt{F}}:h^0\rightarrow\mathbb{C}$, is defined via \cite{Fu, Ti2}
\begin{equation}\label{Futaki}
{\tt{F}}(X)=\int_MX(u)dv,\hspace{0.3cm}\forall X\in h^0.
\end{equation}
It is well known that $\tt{F}$ does not depend on the specified
chosen metric $g$ in the K\"ahler class $\pi c_1$.

Introduce as in \cite{PSSW2} two inner products on the space of
sections of $T^{1,0}M$:
\begin{equation}\nonumber
\langle V,W\rangle_0=\frac{1}{\V}\int_Mg_{i\bar{j}}V^i\overline{W^j}dv,\hspace{0.3cm}\langle V,W\rangle_u=\frac{1}{\V}\int_Mg_{i\bar{j}}V^i\overline{W^j}e^{-u}dv.
\end{equation}
Denote by $\pi_0$ and $\pi_u$ the orthogonal projections of
$T^{1,0}$ vector fields onto $h^0$ with respect to
$\langle,\rangle_0$ and $\langle,\rangle_u$ respectively. Let
$\nabla u=g^{i\bar{j}}\frac{\partial u}{\partial\bar{z}^j}
\frac{\partial}{\partial z^i}$ be the complex gradient field of $u$.
Then by definition, since $\tt{F}\equiv 0$,
\begin{equation}\nonumber
\langle h^0,\nabla u\rangle_0=0.
\end{equation}
In particular, as observed by Phong and Sturm in \cite{PhSt},
\begin{equation}\nonumber
\int_M|\pi_0(\nabla u)|^2dv={\tt{F}}(\pi_0(\nabla u))=0,
\end{equation}
which implies that $\pi_0(\nabla u)\equiv0$. We first establish the
following proposition which relates the second eigenvalue of
$L=-\triangle+g^{i\bar{j}}\nabla_iu\nabla_{\bar{j}}$ and the strict
Poincar\'e inequality (1.1). The proposition has appeared in
\cite{ChTi1} and \cite{Zhu2} when the K\"ahler metric is close to a
K\"ahler-Einstein metric; in \cite{ChLiWa} and \cite{ChLi2}, a
similar property was proved for manifolds under certain pre-stable
condition.

\begin{proposition}\label{p31}
Let $(M,g)$ be a compact K\"ahler manifold whose K\"ahler form lies in $\pi c_1(M)$. Suppose the Futaki invariant $\tt{F}\equiv 0$ on $\pi c_1(M)$. Then the following general estimate holds
\begin{equation}\label{e31}
\int_M|\nabla u|^2e^{-u}dv\geq(1+\delta^{'})\int_M(u-a)^2e^{-u}dv
\end{equation}
where $a=\frac{1}{\V}\int_Mue^{-u}dv$, while $\delta^{'}$ is a constant depending only on the lower bound of $\delta$ as in Theorem \ref{t2} and the upper bound of $\osc(u)$.
\end{proposition}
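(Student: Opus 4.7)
The plan is to decompose $u-a$ spectrally with respect to $L$ and to exploit the vanishing of the Futaki invariant through a single clean identity. Let $H_0$ denote the eigenspace of $L$ for eigenvalue $1$; classically $f\in H_0$ iff $\nabla f$ is a holomorphic vector field. Using the spectral gap $\lambda(t)\geq 1+\delta$ together with $\int_M(u-a)e^{-u}\,dv=0$, decompose
\[
u-a = p+q
\]
orthogonally in the weighted function inner product $(f,g)\mapsto\frac{1}{\V}\int_M fg\,e^{-u}\,dv$, where $p$ is the projection onto $H_0$ and $q$ lies in the span of eigenspaces with eigenvalues $\geq 1+\delta$. Setting $P=\int_M p^2 e^{-u}\,dv$ and $Q=\int_M q^2 e^{-u}\,dv$, standard eigenfunction identities and orthogonality yield
\[
\int_M(u-a)^2 e^{-u}\,dv = P+Q,\qquad \int_M|\nabla u|^2 e^{-u}\,dv = P+\int_M|\nabla q|^2 e^{-u}\,dv,
\]
together with $\int_M|\nabla p|^2 e^{-u}\,dv = P$ and $\int_M|\nabla q|^2 e^{-u}\,dv\geq (1+\delta)Q$.

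In this setup, (\ref{e31}) will follow by elementary algebra from a bound of the form $P\leq C_1(A)\int_M|\nabla q|^2 e^{-u}\,dv$, with $A\geq\osc(u)$; balancing this convexly against the spectral estimate $\int_M|\nabla q|^2 e^{-u}\,dv\geq(1+\delta)Q$ then gives $\delta'=\delta/(1+C_1(A)(1+\delta))$. To obtain such a bound we derive a single identity. Since $\nabla p\in h^0$, vanishing of the Futaki invariant gives $\int_M\nabla u\cdot\nabla p\,dv = 0$; on the other hand, integration by parts against $e^{-u}\,dv$ combined with $Lp=p$ yields $\int_M\nabla u\cdot\nabla p\,e^{-u}\,dv = \int_M(u-a)p\,e^{-u}\,dv = P$. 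Subtracting, splitting $\nabla u=\nabla p+\nabla q$, and using $\int_M|\nabla p|^2 e^{-u}\,dv = P$ to handle the $\nabla p\cdot\nabla p$ piece, one arrives at the clean identity
\[
\int_M|\nabla p|^2\,dv = \int_M\nabla p\cdot\nabla q\,(e^{-u}-1)\,dv.
\]

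To finish, observe that the normalization (\ref{RPN}) forces $u$ to vanish somewhere, hence $\|u\|_\infty\leq\osc(u)\leq A$, so $\|e^{-u}-1\|_\infty\leq e^A-1$ and $e^{-A}\leq e^{-u}\leq e^A$. Applying Cauchy--Schwarz to the right-hand side of the identity above, and converting between weighted and unweighted $L^2$-norms of $\nabla p$ and $\nabla q$ via the two-sided comparison, produces $P\leq e^{2A}(e^A-1)^2\int_M|\nabla q|^2 e^{-u}\,dv$, which is the required estimate with $C_1(A)=e^{2A}(e^A-1)^2$. The main subtlety, and what makes the argument go through, is recognizing that the Futaki invariant should be tested against the holomorphic field $\nabla p$ --- the one associated with the \emph{projection} of $u-a$ onto $H_0$ --- rather than against $\nabla u$ itself or a fixed basis of $h^0$; this precise choice causes the bulk of the terms to cancel and produces the simple identity above, after which the proof is pure Cauchy--Schwarz and algebra.
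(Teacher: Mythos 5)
Your proposal is correct and follows essentially the same route as the paper: the same spectral decomposition of $u-a$ with respect to $L$, the same use of the vanishing Futaki invariant tested against the holomorphic field $\nabla p=\pi_u(\nabla u)$, and your identity $\int_M|\nabla p|^2dv=\int_M\nabla p\cdot\nabla q\,(e^{-u}-1)\,dv$ is exactly the paper's Lemma \ref{l31} in disguise, since the weighted cross term $\int_M\nabla p\cdot\nabla q\,e^{-u}dv$ vanishes by orthogonality. The only (harmless) difference is that you apply Cauchy--Schwarz with the factor $e^{-u}-1$ kept inside, which yields the admissible but slightly worse constant $C_1(A)=e^{2A}(e^A-1)^2$ in place of the paper's $e^{\osc(u)}$.
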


Here, $\osc(f)=\max f-\inf f$ for any function $f$. To prove this proposition we need the following lemma.

\begin{lemma}\label{l31}
Let $\nabla u=\pi_u(\nabla u)+V$ be the orthogonal decomposition
with respect to $\langle,\rangle_u$. Then
\begin{equation}\nonumber
\langle\pi_u(\nabla u),\pi_u(\nabla u)\rangle_0\leq\langle
V,V\rangle_0.
\end{equation}
\end{lemma}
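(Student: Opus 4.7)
The plan is to exploit the vanishing of the Futaki invariant, which forces $\nabla u$ to be orthogonal to $h^0$ with respect to the unweighted inner product $\langle,\rangle_0$, and then play this against the $\langle,\rangle_u$-orthogonality built into the decomposition.

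More concretely, set $W = \pi_u(\nabla u) \in h^0$, so that $\nabla u = W + V$ with $V$ $\langle,\rangle_u$-orthogonal to $h^0$. Recall from the paragraph preceding the proposition that $\tt{F} \equiv 0$ together with Phong-Sturm's observation gives $\pi_0(\nabla u) = 0$, i.e.\ $\nabla u$ is $\langle,\rangle_0$-orthogonal to every element of $h^0$. Applying this to $W \in h^0$ itself yields
\begin{equation*}
0 = \langle \nabla u, W \rangle_0 = \langle W, W \rangle_0 + \langle V, W \rangle_0,
\end{equation*}
so $\langle W, W \rangle_0 = -\langle V, W \rangle_0$. Taking absolute values and applying the Cauchy-Schwarz inequality for $\langle,\rangle_0$,
\begin{equation*}
\langle W, W \rangle_0 = |\langle V, W \rangle_0| \leq \langle V, V \rangle_0^{1/2} \langle W, W \rangle_0^{1/2},
\end{equation*}
which after dividing by $\langle W, W \rangle_0^{1/2}$ (trivial if $W=0$) and squaring gives exactly $\langle W, W \rangle_0 \leq \langle V, V \rangle_0$, as desired.

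There is really no main obstacle here: the lemma is a one-line consequence of combining two orthogonality relations (one from vanishing Futaki, one built into $\pi_u$) via Cauchy-Schwarz. The only subtlety worth flagging in the write-up is that the two inner products $\langle,\rangle_0$ and $\langle,\rangle_u$ are generally not proportional, so the two projections $\pi_0$ and $\pi_u$ differ; the lemma is precisely a quantitative control on how much $\pi_u(\nabla u)$ can be, relative to the orthogonal remainder $V$, when measured in the \emph{other} inner product. This clean bound is exactly the ingredient one expects to need when converting the eigenvalue hypothesis (2.2) into the strict Poincar\'e inequality (\ref{e31}) of Proposition \ref{p31}, because the difference between \eqref{poincare} and \eqref{e31} is precisely controlled by how far $\nabla u$ is from the holomorphic vector fields realizing equality in \eqref{poincare}.
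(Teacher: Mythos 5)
Your proof is correct and is essentially identical to the paper's: both use the vanishing of the Futaki invariant to get $\langle \nabla u, \pi_u(\nabla u)\rangle_0=0$, expand this via the decomposition $\nabla u=\pi_u(\nabla u)+V$, and finish with the Cauchy--Schwarz inequality for $\langle\,,\rangle_0$. No further comparison is needed.
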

\begin{proof}
First observe that
$$0=\langle\pi_u(\nabla u),\nabla u\rangle_0=\langle\pi_u(\nabla u),\pi_u(\nabla u)\rangle_0+\langle\pi_u(\nabla u),V\rangle_0.$$
Then the estimate follows from the Schwarz inequality:
$$\langle\pi_u(\nabla u),\pi_u(\nabla u)\rangle_0=-\langle\pi_u(\nabla u),V\rangle_0\leq\langle\pi_u(\nabla u),\pi_u(\nabla u)\rangle_0^{1/2}\cdot\langle V,V\rangle_0^{1/2}.$$
\end{proof}

Now we give a proof of the proposition.

\begin{proof}[Proof of Proposition \ref{p31}]
For brevity define a probability measure
$d\rho=\frac{1}{\V}e^{-u}dv$ on $M$. Denote by
$\lambda_1<\lambda_2<\cdots$ the sequence of positive eigenvalues of
the operator
$L=-\triangle+g^{i\bar{j}}\nabla_iu\cdot\nabla_{\bar{j}}$ acting on
function space $L^2(d\rho)$ which are bigger than $1$. In view of
the Poincar\'e inequality (\ref{poincare}), the only possible
eigenvalue of $L$ between $0$ and $\lambda_1$ is $1=\lambda_0$ with
engenspace generated by functions whose complex gradient fields are
holomorphic. Let $E_k$ be the eigenspace of $\lambda_k$ and
$$u-a=u_0+u_1+u_2+\cdots$$
be the unique orthogonal decomposition with respect to the measure $d\rho$, where $u_k\in E_k$ and $u_0$ has holomorphic complex gradient field.

For any $k>0$ we have by assumption $\lambda_k\geq\lambda(t)\geq1+\delta$. Thus,
\begin{eqnarray}
\int_M(u-a)^2d\rho&=&\sum_{k=0}^\infty\int_M |u_k|^2d\rho=\sum_{k=0}^\infty\lambda_k^{-1}\int_M|\nabla u_k|^2d\rho\nonumber\\
&\leq&\int_M|\nabla u_0|^2d\rho+\sum_{k=1}^\infty\frac{1}{1+\delta}\int_M|\nabla u_k|^2d\rho\nonumber\\
&=&\int_M(|\nabla u_0|^2+\frac{1}{1+\delta}|V|^2)d\rho\nonumber.
\end{eqnarray}
Notice that $\pi_u(\nabla u)=\nabla u_0$. By Lemma \ref{l31},
$$\int_M|\nabla u_0|^2d\rho\leq e^{-\min u}\langle\nabla u_0,\nabla u_0\rangle_0\leq e^{-\min u}\langle V,V\rangle_0\leq e^{\osc (u)}\int_M|V|^2d\rho.$$
Thus, by a simple calculation,
$$\int_M(u-a)^2d\rho\leq\int_M(|\nabla u_0|^2+\frac{1}{1+\delta}|V|^2)d\rho
\leq\frac{1+e^{\osc(u)}+\delta
e^{\osc(u)}}{(1+\delta)(1+e^{\osc(u)})}\int_M|\nabla u|^2d\rho.$$
Then one can choose $$\delta^{'}=\frac{\delta }{1+e^{\osc(u)}+\delta
e^{\osc(u)}}.$$ The proof of the proposition is complete.
\end{proof}

With the proposition in hand, then Theorem \ref{t2} follows
directly.

Now we turn to prove Theorem \ref{t3}. For this purpose, we need to
reduce the estimate of eigenvalue on vector fields to the estimate
of $\lambda$. Denote by $\mu$ and $\tilde{\mu}$ be the lowest
positive eigenvalue of $\bar{\partial}^*\bar{\partial}$ and
$-g^{i\bar{j}}\nabla_i\nabla_{\bar{j}}+g^{i\bar{j}}\nabla_iu\cdot\nabla_{\bar{j}}$
acting on smooth $T^{1,0}$ vector fields respectively. Then $\mu$
and $\tilde{\mu}$ can be determined as the largest numbers such that
the following hold:
\begin{eqnarray}\nonumber
\int_M|\bar{\nabla} V|^2dv&\geq&\mu\int_M|V|^2dv,\hspace{0.3cm}\forall \langle h^0,V\rangle_0=0,\\
\int_M|\bar{\nabla}
V|^2e^{-u}dv&\geq&\tilde{\mu}\int_M|V|^2e^{-u}dv,\hspace{0.3cm}\forall
\langle h^0,V\rangle_u=0,\nonumber
\end{eqnarray}
The following lemma is essentially due to Phong, Song, Sturm and Weinkove \cite{PSSW2}:

\begin{lemma}\label{l32}
The eigenvalues $\mu$ and $\tilde{\mu}$ are related by
\begin{equation}\nonumber
e^{-\osc(u)}\mu\leq\tilde{\mu}\leq e^{\osc(u)}\mu.
\end{equation}
\end{lemma}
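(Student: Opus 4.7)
The plan is to exploit two facts. First, the weight $e^{-u}$ satisfies $e^{-\max u}\leq e^{-u}\leq e^{-\min u}$, so pointwise the unweighted and $u$-weighted integrals differ by at most the factor $e^{\osc(u)}$. Second, holomorphic vector fields $X\in h^0$ are annihilated by $\bar\nabla$, so adding or subtracting an element of $h^0$ does not change $\int|\bar\nabla V|^2 dv$ at all. The only subtlety is that the two constraints ``$\langle h^0,V\rangle_0=0$'' and ``$\langle h^0,V\rangle_u=0$'' are genuinely different, so one cannot merely insert a factor of $e^{\osc(u)}$; one must first project an admissible vector field for one problem onto an admissible vector field for the other, and control the error in norm.

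To prove $\tilde\mu\geq e^{-\osc(u)}\mu$, I would start with an arbitrary $V\in\Gamma(T^{1,0}M)$ satisfying $\pi_u(V)=0$ and decompose $V=V_0+V_\perp$ orthogonally with respect to $\langle,\rangle_0$, so that $V_0\in h^0$ and $\pi_0(V_\perp)=0$. Since $V_0\in h^0$, the condition $\pi_u(V)=0$ gives $\langle V,V_0\rangle_u=0$, i.e.\ $\|V_0\|_u^2=-\mathrm{Re}\langle V_\perp,V_0\rangle_u$. Cauchy--Schwarz then yields $\|V_0\|_u\leq\|V_\perp\|_u$, and expanding $\|V\|_u^2$ shows
\[
\|V\|_u^2=\|V_\perp\|_u^2-\|V_0\|_u^2\leq\|V_\perp\|_u^2.
\]
Since $\bar\nabla V=\bar\nabla V_\perp$ and $V_\perp$ is admissible for the unweighted problem, chaining the two weight comparisons gives
\[
\int_M|\bar\nabla V|^2e^{-u}dv\geq e^{-\max u}\int_M|\bar\nabla V_\perp|^2dv\geq e^{-\max u}\mu\int_M|V_\perp|^2dv\geq e^{-\osc(u)}\mu\int_M|V_\perp|^2e^{-u}dv,
\]
which is $\geq e^{-\osc(u)}\mu\int_M|V|^2e^{-u}dv$ by the norm comparison above.

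The reverse inequality $\tilde\mu\leq e^{\osc(u)}\mu$ is obtained by swapping the roles of the two inner products: given $V$ with $\pi_0(V)=0$, decompose it orthogonally in $\langle,\rangle_u$ and run the same Cauchy--Schwarz argument to obtain $\|V\|_0\leq\|V_\perp\|_0$ for the $u$-perpendicular part, then chain the weight inequalities in the opposite order.

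The one place that requires care, and what I view as the main point of the argument, is the Cauchy--Schwarz step that bounds the size of the holomorphic component: without using the vanishing of the Futaki invariant directly, it is precisely the same trick already employed in Lemma \ref{l31} applied to a different vector field, so no new analytic input is needed beyond the pointwise weight comparison and the fact that $\bar\nabla$ kills $h^0$. Once the norm comparisons $\|V\|_\star^2\leq\|V_\perp\|_\star^2$ are in place, the rest is a clean two-line sandwich.
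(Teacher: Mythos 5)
Your proof is correct and follows essentially the same route as the paper: decompose the $\langle\,,\rangle_u$-admissible field orthogonally with respect to $\langle\,,\rangle_0$ into a holomorphic part plus a remainder, use the cross-term identity to get $\|V\|_u^2=\|V_\perp\|_u^2-\|V_0\|_u^2\leq\|V_\perp\|_u^2$, note $\bar\nabla V=\bar\nabla V_\perp$, and sandwich the weighted and unweighted norms by $e^{\pm\osc(u)}$. The only cosmetic difference is that your intermediate Cauchy--Schwarz step is not actually needed once you expand $\|V\|_u^2$ directly, exactly as in the paper's argument.
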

\begin{proof}
Let $V$ be a section of $T^{1,0}M$ such that $\langle V,h^0\rangle_u=0$ and decompose it with respect to $\langle,\rangle_0$ as $V=W+\xi$ such that $\xi\in h^0$ and $\langle W,\xi\rangle_0=0$. Then,
$$0=\langle V,\xi\rangle_u=\langle\xi,\xi\rangle_u+\langle W,\xi\rangle_u.$$
Thus, $\langle\xi,\xi\rangle_u=-\langle W,\xi\rangle_u$. So, $$\langle V,V\rangle_u=\langle V,W\rangle_u=\langle W,W\rangle_u+\langle\xi ,W\rangle_u=\langle W,W\rangle_u
-\langle\xi,\xi\rangle_u\leq\langle W,W\rangle_u.$$ Now, using $\xi\in h^0$,
\begin{eqnarray}
\frac{1}{\V}\int_M|\bar{\nabla}V|^2e^{-u}dv&\geq&e^{-\max(u)}\frac{1}{\V}\int_M|\bar{\nabla}V|^2dv=e^{-\max(u)}\frac{1}{\V}\int_M|\bar{\nabla}W|^2dv\nonumber\\
&\geq&\mu e^{-\max(u)}\langle W,W\rangle_0\geq \mu e^{-\osc(u)}\langle W,W\rangle_u\nonumber\\
&\geq&\mu e^{-\osc(u)}\langle V,V\rangle_u.\nonumber
\end{eqnarray}
In particular, $\tilde{\mu}\geq\mu e^{-osc u}$. A similar argument gives another side estimate.
\end{proof}

Now we are ready to give a proof of Theorem \ref{t3}.

\begin{proof}[Proof of Theorem \ref{t3}]
It suffices to prove the following estimate for any time:
\begin{equation}\label{e33}
\lambda\geq 1+e^{-\osc(u)}\mu.
\end{equation}
Actually this follows from an observation in the proof of the
Poincar\'e Lemma. Let $\psi$ be an eigenfunction of $\lambda$, then
$\langle h^0,\nabla\psi\rangle_u=0$ and thus, cf. Lemma 4.4 in
\cite{PSSW1},
\begin{eqnarray}
(\lambda-1)\int_M|\nabla\psi|^2e^{-u}dv=\int_M|\nabla_i\nabla_j\psi|^2e^{-u}dv
\geq\tilde{\mu}\int_M|\nabla\psi|^2e^{-u}dv\nonumber.
\end{eqnarray}
Then (\ref{e33}) follows directly from Lemma \ref{l32}. The proof of
the theorem is complete.
\end{proof}

\section{Modified K\"ahler-Ricci flow by a holomorphic vector field}

Let $M$ be a compact K\"ahler manifold with $c_1(M)>0$ and $X$ be a
holomorphic vector field whose imaginary part $Im(X)$ induces an
$S^1$ holomorphic action on $M$. Let $g_0$ be an invariant metric
whose K\"ahler class is $\pi c_1(M)$. Let $g(t),t\in[0,\infty)$, be
the solution to the modified K\"ahler-Ricci flow (\ref{MKRF}) and
$\V$ be the volume of this flow. Notice that this flow is just a
reparametrization of the K\"ahler-Ricci flow (\ref{KRF}) starting
from $g_0$, thus Perelman's estimate of the Ricci potential
(\ref{Pe}) remains valid:
\begin{equation}\label{e40}
\|u\|_{C^0}+\|\nabla u\|_{C^0}+\|\triangle u\|_{C^0}\leq C.
\end{equation}
Furthermore, the volume noncollapsing condition holds as well:
\begin{equation}
\Vol_{g(t)}(B_{g(t)}(x,r))\geq\kappa r^{2n},\hspace{0.3cm}\forall
t>0\mbox{ and }r\leq 1.
\end{equation}
Here, $C$ and $\kappa$ are positive constant independent of the time.

By Hodge theory, there exists a family of real-valued functions
$\theta=\theta_X(t)$ such that
\begin{equation}\label{e41}
\partial_{\bar{j}}\theta=g_{i\bar{j}}X^i=X_{\bar{j}}.
\end{equation}
We suppose the functions are normalized by
\begin{equation}\label{e42}
\frac{1}{\V}\int_Me^{\theta}dv\equiv 1.
\end{equation}
Then the modified K\"ahler-Ricci flow (\ref{MKRF}) can be written as \cite{PSSW3}
\begin{equation}\label{MKRF0}
\frac{\partial}{\partial t}g_{i\bar{j}}=-R_{i\bar{j}}+g_{i\bar{j}}+\partial_i\partial_{\bar{j}}\theta.
\end{equation}

As usual, let $u$ be the normalized Ricci potentials determined by
(\ref{RP}) and (\ref{RPN}). For simplicity, define as in \cite{PSSW3} the {\it modified
Ricci potential} $w=u+\theta$. Then $g(t)$ is a stationary solution
of the modified K\"ahler-Ricci flow iff $w$ is constant. In \cite{PSSW3}, the authors also proved similar estimates for $w$ and $\theta$:
\begin{eqnarray}\label{e43}
\|\nabla\theta\|_{C^0}+\|\triangle\theta\|_{C^0}+\|w\|_{C^0}+\|\nabla w\|_{C^0}+\|\triangle w\|_{C^0}\leq C
\end{eqnarray}
for some $C$ independent of $t$. In particular, $\|X\|_{C^0}$ is uniformly bounded.

Under the normalizing condition (\ref{RPN}) and (\ref{e42}), $\theta$ and $w$ satisfy
\begin{eqnarray}\label{e44}
\frac{\partial\theta}{\partial t}&=&X(w),\\
\frac{\partial w}{\partial t}&=&(\triangle+X)w+w-a_X,\label{e45}
\end{eqnarray}
where $a_X=\frac{1}{\V}\int_Mwe^{-u}dv$ is the average of $w$. Obviously $a_X$ is uniformly bounded under the modified K\"ahler-Ricci flow.

Notice that the modified K\"ahler-Ricci flow can be written as $\frac{\partial}{\partial t}g_{i\bar{j}}=\partial_i\partial_{\bar{j}}w$. Then following the arguments in \S 2, just by replacing the Ricci potential $u$ by the modified Ricci potential $w$, one can prove the following theorem:

\begin{theorem}\label{t52}
Let $g(t),t\in[0,\infty)$, be a solution to modified K\"ahler-Ricci
flow (\ref{MKRF}) on a compact K\"ahler manifold $M$ with positive
$c_1(M)$. Suppose the modified Ricci potential $w(t)$ satisfies
\begin{equation}\label{e46}
\int_M|\nabla w|^2e^{-u}dv\geq(1+\delta)\int_M(w-a_X)^2e^{-u}dv
\end{equation}
for some $\delta>0$ independent of $t$, then $g(t)$ converges exponentially fast in the $C^\infty$ sense to a K\"ahler-Ricci soliton.
\end{theorem}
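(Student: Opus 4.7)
The plan is to transcribe the proof of Theorem \ref{t1} line by line with the Ricci potential $u$ replaced by the modified Ricci potential $w = u+\theta$, while keeping the reference measure $e^{-u}\,dv$. The only new algebraic ingredient, and the crux of the transcription, is the identity
\[
\int_M f(\Delta w + X(w))\,e^{-u}dv = -\int_M g^{i\bar{j}}f_{\bar{j}}w_i\,e^{-u}dv + \int_M f|\nabla w|^2\,e^{-u}dv
\]
for any smooth $f$. It follows from a single integration by parts once one uses $X^i = g^{i\bar j}\partial_{\bar j}\theta$ and $w = u+\theta$: the extra term $-fu_{\bar j}$ produced when $\partial_{\bar j}$ hits $e^{-u}$ combines with $X(w)e^{-u}$ to restore $u+\theta = w$.

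With this identity in hand, I would first derive $\partial_t u = \Delta w + w - a_X$ and $\partial_t(e^{-u}dv) = -(w-a_X)e^{-u}dv$ by differentiating (\ref{RP}) and using (\ref{RPN}); combined with (\ref{e45}) and the $f\equiv 1$ case of the identity, this gives the evolution of the average,
\[
\frac{da_X}{dt} = \frac{1}{\V}\int_M(|\nabla w|^2 - (w-a_X)^2)e^{-u}dv =: Z(t),
\]
nonnegative by (\ref{e46}). Since $a_X$ is uniformly bounded by (\ref{e43}), $\int_0^\infty Z\,dt<\infty$, and a computation of $\frac{dZ}{dt}$ using the evolutions of $|\nabla w|^2$ and $(w-a_X)^2$ under (\ref{MKRF}) together with Perelman's bounds (\ref{e40}), (\ref{e43}) shows it is bounded above. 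Hence $Z(t)\to 0$, and (\ref{e46}) forces $Y(t) := \frac{1}{\V}\int(w-a_X)^2 e^{-u}dv\to 0$, giving the analogues of Lemmas \ref{l21} and \ref{l22}. The $L^\infty$ counterpart $\|w-a_X\|_{C^0}\le A\,Y^{1/(2n+2)}$, i.e.\ the analogue of Lemma \ref{l23}, uses only the gradient bound $\|\nabla w\|_{C^0}\le C$ from (\ref{e43}) and non-collapsing.

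The exponential decay step is the analogue of Lemma \ref{l24}. Computing $\frac{dY}{dt}$ via $\partial_t w = \Delta w + X(w) + w - a_X$ and the identity with $f = w-a_X$, every $X(w)$-cross term absorbs into $|\nabla w|^2$ and one obtains
\[
\frac{dY}{dt} = \frac{1}{\V}\int_M\bigl[2(w-a_X)|\nabla w|^2 - 2|\nabla w|^2 + 2(w-a_X)^2 - (w-a_X)^3\bigr]e^{-u}dv,
\]
structurally identical to the formula used in Lemma \ref{l24}. The same estimate with (\ref{e46}) in place of the Poincar\'e inequality yields $\frac{dY}{dt}\le -\delta\,Y$ once $\|w-a_X\|_{C^0}$ is small, hence exponential decay of $Y$ and of $\|w-a_X\|_{C^0}$.

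Finally I would deduce smooth convergence to a soliton: since (\ref{MKRF}) reads $\partial_t g_{i\bar{j}} = \partial_i\bar\partial_j w$ and $a_X(t)$ is spatially constant, $\phi(t) := \int_0^t(w(s)-a_X(s))ds$ is a relative K\"ahler potential, bounded by the exponential $C^0$-decay of $w-a_X$. Yau's $C^\infty$ estimates then give uniform $C^\infty$-equivalence of the metrics, and the bootstrap of \cite{PhSt,PSSW1} upgrades this to exponential $C^\infty$ decay of $\partial_t g$. The limit satisfies $w\equiv$ const, i.e.\ $R_{i\bar{j}} = g_{i\bar{j}} + \nabla_i X_{\bar{j}}$, a K\"ahler-Ricci soliton with respect to $X$. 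The main obstacle is essentially notational: one must verify the algebraic identity above, which reconciles the mismatch between the measure $e^{-u}dv$ and the potential $w$; once that is settled, everything else is a faithful transcription of Section 2 with $u\mapsto w$.
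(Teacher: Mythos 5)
Your proposal is essentially identical to the paper's proof: the paper itself proves Theorem \ref{t52} by declaring that one repeats the arguments of \S 2 with $u$ replaced by the modified potential $w$ (keeping the measure $e^{-u}dv$), which is exactly the transcription you carry out, and your key integration-by-parts identity reconciling $X(w)e^{-u}$ with $|\nabla w|^2 e^{-u}$ is the correct mechanism making that transcription work (it reproduces the paper's formula (\ref{e47}) for $da_X/dt$). The only detail the paper adds that you gloss over is that the final smoothing step, upgrading the $C^0$ decay to exponential decay of $\|\nabla w\|_{C^0}$, should invoke the Smoothing Lemma 10 of \cite{PSSW3} adapted to the modified flow rather than Lemma 1 of \cite{PSSW1}.
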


Here we mention that to show the exponential decay of $\|\nabla w\|_{C^0}$ one should use the Smoothing Lemma 10 in \cite{PSSW3}.

For application, recall the definition of the modified Futaki
invariant with respect to the holomorphic vector field \cite{TiZh1},
${\tt{F}}_X:h^0\rightarrow\mathbb{C}$,
\begin{equation}\label{modified Futaki}
{\tt{F}}_X(Y)=\int_MY(w)e^{\theta}dv
\end{equation}
where $w$ is the modified Ricci potential and $\theta$ is defined
via (\ref{e41}) and (\ref{e42}). It is showed in \cite{TiZh1} that
the definition does not depend on the specified metric $g$ in $\pi
c_1(M)$.

Let $\lambda$ be the first eigenvalue, except $1$, of
$L=-\triangle+g^{i\bar{j}}\nabla_iu\nabla_{\bar{j}}$ acting on
function space. As in \S 3, we have the following general
proposition:

\begin{proposition}\label{p41}
Let $(M,g)$ be any compact K\"ahler manifold whose K\"ahler form lies in $\pi c_1(M)$. Suppose the modified Futaki invariant ${\tt{F}}_X\equiv 0$ on $\pi c_1(M)$. Denote $\lambda=1+\delta$, then the following general estimate holds
\begin{equation}\label{e47}
\int_M|\nabla w|^2e^{-u}dv\geq(1+\delta^{'})\int_M(w-a_X)^2e^{-u}dv
\end{equation}
where $\delta^{'}$ is a constant depending only on the lower bound of $\delta$ and the upper bound of $\osc(w)$.
\end{proposition}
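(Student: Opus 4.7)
The plan is to mirror the argument for Proposition~\ref{p31}, with the Ricci potential $u$ replaced throughout by the modified Ricci potential $w = u + \theta$ and the unweighted Hermitian pairing $\langle\cdot,\cdot\rangle_0$ replaced by the weighted pairing
$$\langle V, W\rangle_\theta \;:=\; \frac{1}{\V}\int_M g_{i\bar{j}} V^i \overline{W^j}\, e^\theta \,dv$$
on sections of $T^{1,0}M$. Let $\pi_\theta$ denote the $\langle\cdot,\cdot\rangle_\theta$-orthogonal projection onto $h^0$. By the definition (\ref{modified Futaki}), the hypothesis ${\tt F}_X \equiv 0$ says exactly that $\pi_\theta(\nabla w) = 0$, and this will play the role that $\pi_0(\nabla u) = 0$ did in Proposition~\ref{p31}.

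First I would perform the spectral decomposition in $L^2(d\rho)$, where $d\rho = \frac{1}{\V} e^{-u}\,dv$. Write $w - a_X = w_0 + \sum_{k\geq 1} w_k$, where $w_0$ lies in the eigenspace of $L$ at eigenvalue $1$ (so $\nabla w_0 \in h^0$, by the equality case of (\ref{poincare})) and each $w_k$ for $k \geq 1$ has eigenvalue $\lambda_k \geq 1 + \delta$. Exactly as in the proof of Proposition~\ref{p31}, this gives
$$\int_M (w - a_X)^2\, d\rho \;\leq\; \int_M |\nabla w_0|^2\, d\rho \;+\; \frac{1}{1+\delta}\int_M |V|^2\, d\rho, \qquad V := \nabla w - \nabla w_0.$$
Integration by parts then turns the $L^2(d\rho)$-orthogonality of the eigenfunctions into $\langle\cdot,\cdot\rangle_u$-orthogonality of their complex gradients; in particular $\pi_u(\nabla w) = \nabla w_0$ and $V$ is $\langle\cdot,\cdot\rangle_u$-orthogonal to $h^0$.

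Second, I would establish the modified analog of Lemma~\ref{l31}. Since $\pi_u(\nabla w) \in h^0$ and $\pi_\theta(\nabla w) = 0$, the pairing $\langle \pi_u(\nabla w), \nabla w\rangle_\theta$ vanishes; expanding $\nabla w = \pi_u(\nabla w) + V$ and applying the Schwarz inequality produces
$$\|\pi_u(\nabla w)\|_\theta \;\leq\; \|V\|_\theta.$$
The identity $e^\theta \,dv = e^w\, e^{-u}\, dv$, combined with the Perelman-type uniform bound (\ref{e43}) on $w$, then yields
$$\|\nabla w_0\|_u^2 \;\leq\; e^{-\min w}\|\nabla w_0\|_\theta^2 \;\leq\; e^{-\min w}\|V\|_\theta^2 \;\leq\; e^{\osc(w)}\|V\|_u^2.$$
Inserting this and $\int_M |\nabla w|^2\, d\rho = \|\nabla w_0\|_u^2 + \|V\|_u^2$ into the spectral estimate and solving algebraically produces (\ref{e47}) with, for instance,
$$\delta' \;=\; \frac{\delta}{1 + (1+\delta)\, e^{\osc(w)}},$$
which depends only on the lower bound of $\delta$ and the upper bound of $\osc(w)$, as asserted.

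I expect the most delicate step to be the identification $\pi_u(\nabla w) = \nabla w_0$, i.e.\ the orthogonality $V \perp_u h^0$. This rests on the fact that every vector field in $h^0$ arises as the complex gradient of an eigenfunction of $L$ at eigenvalue $1$, so that $L^2(d\rho)$-orthogonality of the $w_k$ ($k\geq 1$) to this eigenspace upgrades to $\langle\cdot,\cdot\rangle_u$-orthogonality of the $\nabla w_k$ to $h^0$ via the integration-by-parts identity underlying the definition of $L$. This ingredient is implicit in the corresponding step of the proof of Proposition~\ref{p31} and should carry over here without change.
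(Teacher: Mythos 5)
Your proposal is correct and follows essentially the same route as the paper: the spectral decomposition of $w-a_X$ in $L^2(d\rho)$, the identification of $\pi_\theta(\nabla w)=0$ from ${\tt F}_X\equiv 0$, the Schwarz-inequality comparison $\|\pi_u(\nabla w)\|_\theta\leq\|V\|_\theta$, and the weight comparison via $e^\theta\,dv=e^{w}e^{-u}\,dv$ are all exactly the steps in the paper's proof of Proposition \ref{p41}, and your constant $\delta'=\delta/(1+(1+\delta)e^{\osc(w)})$ agrees with the paper's $\delta/(1+e^{\osc(w)}+\delta e^{\osc(w)})$. No gaps.
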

\begin{proof}
Introduce two inner products on the space of sections of $T^{1,0}M$:
\begin{equation}\nonumber
\langle V,W\rangle_\theta=\frac{1}{\V}\int_Mg_{i\bar{j}}V^i\overline{W^j}e^\theta dv,\hspace{0.3cm}\langle V,W\rangle_u=\frac{1}{\V}\int_Mg_{i\bar{j}}V^i\overline{W^j}e^{-u}dv.
\end{equation}
Define a probability measure $d\rho=\frac{1}{\V}e^{-u}dv$ and let
$w-a_X=w_0+w_+$ be an orthogonal decomposition with respect to
$d\rho$ where $w_0$ has holomorphic complex gradient. In other
words, $w_0$ is nothing but the projection of $w$ onto the
eigenspace of $L$ with eigenvalue $1$. Then as in the proof of
Proposition \ref{p31}, one can show that
\begin{equation}\nonumber
\int_M(w-a_X)^2d\rho\leq\int_M(|\nabla
w_0|^2+\frac{1}{1+\delta}|\nabla w_+|^2)d\rho.
\end{equation}
Next we claim
\begin{equation}\label{e48}
\int_M|\nabla w_0|^2d\rho\leq e^{\osc(w)}\int_M|\nabla w_+|^2d\rho.
\end{equation}
Suppose the inequality for the moment, then
$$\int_M(w-a_X)^2d\rho\leq\frac{1+e^{\osc(w)}+\delta e^{\osc(w)}}{(1+\delta)(1+e^{\osc(w)})}\int_M|\nabla w|^2d\rho$$
and thus one can choose $\delta^{'}=\frac{\delta}{1+e^{\osc(w)}+\delta e^{\osc(w)}}$.

We finally prove (\ref{e48}). Denote by $\pi_\theta$ the projection onto $h^0$ with respect to $\langle,\rangle_\theta$. Since by assumption
$${\tt{F}}_X(\pi_\theta(\nabla w))=\langle\pi_\theta(\nabla w),\nabla w\rangle_\theta=\langle\pi_\theta(\nabla w),\pi_\theta(\nabla w)\rangle_\theta=0,$$ we have $\pi_\theta(\nabla w)=0$. Then one can derive as in Lemma \ref{l32} that
$$\langle\nabla w_0,\nabla w_0\rangle_\theta\leq\langle\nabla w_+,\nabla w_+\rangle_\theta.$$
Now,
$$\langle\nabla w_0,\nabla w_0\rangle_u\leq e^{-\min w}\langle\nabla w_0,\nabla w_0\rangle_\theta\leq e^{-\min w}\langle\nabla w_+,\nabla w_+\rangle_\theta\leq e^{\osc(w)}\langle\nabla w_+,\nabla w_+\rangle_u$$
as claimed in (\ref{e48}). This completes the proof of the theorem.
\end{proof}

As a corollary, one has

\begin{theorem}
Let $(M,g(t)),t\in[0,\infty)$, be a solution to the modified K\"ahler-Ricci flow {\rm (\ref{MKRF})} whose K\"ahler form lies in $\pi c_1(M)$ and $u(t)$ be associated Ricci potential satisfying {\rm (\ref{RP})} and {\rm (\ref{RPN})}. Let $\lambda(t)$ be the smallest eigenvalue, except $1$, of $L$ acting on smooth functions at time $t$. Suppose that
\begin{itemize}
 \item[(1)] $M$ has vanished modified Futaki invariant on $\pi c_1(M)$, and
 \item[(2)] $\lambda(t)\geq1+\delta$ for a uniform constant $\delta>0$,
\end{itemize}
then $g(t)$ converges exponentially fast in the $C^\infty$ sense to
a K\"ahler-Ricci soliton with respect to the holomorphic vector
field $X$.
\end{theorem}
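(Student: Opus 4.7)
The plan is to observe that the final theorem is essentially a direct combination of Proposition \ref{p41} with Theorem \ref{t52}, so most of the work has already been done. I would first invoke the uniform estimate (\ref{e43}), which gives $\|w\|_{C^0}\leq C$ along the modified K\"ahler-Ricci flow for a constant $C$ depending only on $g_0$. Consequently $\osc(w(t))$ is bounded by a uniform constant $K$ independent of $t$.

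Next I would apply Proposition \ref{p41} at each time $t$. Hypothesis (1) says the modified Futaki invariant vanishes, and hypothesis (2) says $\lambda(t)\geq 1+\delta$; both assumptions of Proposition \ref{p41} hold. The proposition then yields the weighted Poincar\'e-type inequality
\begin{equation*}
\int_M|\nabla w(t)|^2e^{-u(t)}dv_{g(t)}\geq (1+\delta'(t))\int_M(w(t)-a_X(t))^2e^{-u(t)}dv_{g(t)},
\end{equation*}
with $\delta'(t)=\delta/(1+e^{\osc(w(t))}+\delta\,e^{\osc(w(t))})$. Using the uniform bound $\osc(w(t))\leq K$, one sees that $\delta'(t)\geq \delta/(1+e^{K}+\delta\,e^{K})=:\delta'>0$, independent of $t$. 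Thus the strict Poincar\'e-type inequality (\ref{e46}) of Theorem \ref{t52} holds with uniform constant $\delta'$ for all $t\in[0,\infty)$.

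Finally I would invoke Theorem \ref{t52}, which shows that under (\ref{e46}) the modified K\"ahler-Ricci flow converges exponentially fast in the $C^\infty$ sense to a K\"ahler-Ricci soliton with respect to $X$. This completes the argument.

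There is no real obstacle here, since both Proposition \ref{p41} and Theorem \ref{t52} have already been established. The only point requiring care is that the constant $\delta'$ produced by Proposition \ref{p41} be uniform in $t$; this is ensured precisely by the Perelman-type uniform bound (\ref{e43}) on $w$, which controls $\osc(w(t))$ independently of $t$. Once uniformity of $\delta'$ is in hand, the conclusion follows verbatim from Theorem \ref{t52}.
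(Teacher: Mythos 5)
Your proposal is correct and is exactly the argument the paper intends: the theorem is stated as a corollary of Proposition \ref{p41} combined with Theorem \ref{t52}, with the uniformity of $\delta'$ coming from the Perelman-type bound (\ref{e43}) on $w$. You have simply made explicit the chain of implications the paper leaves implicit.
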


Combining with the estimate (\ref{e33}), Theorem \ref{t4} follows
directly.

\section{Further remarks}

\begin{remark}\label{r51}
The average $a$ (respectively $a_X$) in the K\"ahler-Ricci flow
(respectively modified K\"ahler-Ricci flow) can be bounded from
above by Jensen inequality as follows:
\begin{equation}\nonumber
a=\frac{1}{\V}\int_Mue^{-u}dv\leq\ln(\frac{1}{\V}\int_M dv)=0,
\end{equation}
\begin{equation}\nonumber
a_X=\frac{1}{\V}\int_Mwe^{-u}dv\leq\ln(\frac{1}{\V}\int_M e^{w-u}dv)=\ln(\frac{1}{\V}\int_Me^{\theta}dv)=0.
\end{equation}
Combining with the monotonicity of $a$ (respectively $a_X$) under
the K\"ahler-Ricci flow (respectively modified K\"ahler-Ricci flow),
we have the two-sided bound:
\begin{equation}\label{e51}
a(0)\leq a(t)\leq 0,\hspace{0.3cm}a_X(0)\leq a_X(t)\leq 0,\hspace{0.3cm}\forall t.
\end{equation}
Thus, $a$ (respectively $a_X$) admits a natural bound independent of the $C^0$ bound of $u$ (respectively $w$) in a prior. The monotonicity of $a$ is proved in \cite[Remark (1)]{PSSW1}; the monotonicity of $a_X$ follows from a similar calculation:
\begin{equation}\label{e47}
\frac{d a_X}{dt}=\frac{1}{\V}\int_M\big(|\nabla
w|^2-(w-a_X)^2\big)e^{-u}dv
\end{equation}
which is nonnegative by Poincar\'e inequality. Furthermore, the increasing of $a_X$ is strict unless $g(t)$ is a shrinking K\"ahler-Ricci soliton.
\end{remark}

\begin{remark}\label{r52}
We remark that all the conditions in our theorems on the convergence
of the K\"ahler-Ricci flow are not only sufficient, but also
necessary. Indeed, as shown in \cite{PSSW1}, the convergence of the
K\"ahler-Ricci flow implies the condition (3.1), while the existence
of Einstein metric implies the vanishing of the Futaki invariant.
The conditions (2.1) and (1.1) follow from condition (3.1) when the
Futaki invariant vanishes, as shown in \S 3.
\end{remark}

\begin{remark}
The method presented in this paper can also be applied to other
problems, such as Questions (1.2) and (1.3) in \cite{ChLi2}, i.e.,
the convergence of the K\"ahler-Ricci flow with an ``almost
Einstein" initial metric and the stability of the K\"ahler-Ricci
flow around a K\"ahler-Einstein metric (or more generally a
shrinking K\"ahler-Ricci soliton). Here, ``almost Einstein" means
that the initial metric is close to an Einstein metric in certain
sense. The advantage in our argument is the absence of the
pre-stable condition as well as the curvature bounding condition in
a prior. We will get back to this point in the future.
\end{remark}

\end{document}